\newcommand{\fnc}[1]{\ensuremath{\mathcal{#1}}}
\newcommand{\mat}[1]{\ensuremath{\mathsf{#1}}}
\newcommand{\M}[0]{\mat{H}}
\newcommand{\Dx}[0]{\mat{D}_{x}}
\newcommand{\Dy}[0]{\mat{D}_{y}}
\newcommand{\Dz}[0]{\mat{D}_{z}}
\newcommand{\Qx}[0]{\mat{S}_{x}}
\newcommand{\Qy}[0]{\mat{S}_{y}}
\newcommand{\QAx}[0]{\mat{Q}_{x}}
\newcommand{\QAy}[0]{\mat{Q}_{y}}
\newcommand{\QAz}[0]{\mat{Q}_{z}}
\newcommand{\Ex}[0]{\mat{E}_{x}}
\newcommand{\Ey}[0]{\mat{E}_{y}}
\newcommand{\Ez}[0]{\mat{E}_{z}}
\newcommand{\Fx}[0]{\mat{F}_{x}}
\newcommand{\Fy}[0]{\mat{F}_{y}}
\newcommand{\Gx}[0]{\mat{G}_{x}}
\newcommand{\Gy}[0]{\mat{G}_{y}}
\newcommand{\tEx}[0]{\tilde{\mat{E}}_{x}}
\newcommand{\tEy}[0]{\tilde{\mat{E}}_{y}}
\newcommand{\V}[0]{\mat{V}}
\newcommand{\tV}[0]{\tilde{\mat{V}}}
\newcommand{\Vx}[0]{\mat{V}_{x}}
\newcommand{\tVx}[0]{\tilde{\mat{V}}_{x}}
\newcommand{\W}[0]{\mat{W}}
\newcommand{\Wx}[0]{\mat{W}_{x}}
\renewcommand{\P}[0]{\mat{P}}
\newcommand{\Px}[0]{\mat{P}_{x}}
\newcommand{\ax}[0]{a_{x}}
\newcommand{\bx}[0]{b_{x}}
\newcommand{\ay}[0]{a_{y}}
\newcommand{\by}[0]{b_{y}}
\newcommand{\nmin}[1]{n^{*}_{#1}}
\newcommand{\nglob}[0]{\tilde{n}}
\newcommand{\iglob}[0]{\tilde{\imath}}
\newcommand{\jglob}[0]{\tilde{\jmath}}
\newcommand{\Tr}{\ensuremath{^{\mr{T}}}}
\newcommand{\invTr}{\ensuremath{^{-\mr{T}}}}
\newcommand{\mr}[1]{\ensuremath{\mathrm{#1}}}
\newtheorem{remark}{Remark}
\newcommand{\ignore}[1]{}
\newcommand{\etal}[0]{{\em et~al.\@}\xspace}
\newcommand{\eg}[0]{{e.g.\@}\xspace}
\newcommand{\ie}[0]{{i.e.\@}\xspace}
\newcommand{\fpk}{\fnc{P}_{k}}
\newcommand{\fpm}{\fnc{P}_{m}}
\newcommand{\fpl}{\fnc{P}_{l}}
\newcommand{\pk}{\bm{p}_{k}}
\newcommand{\pM}{\bm{p}_{m}}
\newcommand{\dxfpk}{\frac{\partial\fnc{P}_{k}}{\partial x}}
\newcommand{\dxfpm}{\frac{\partial\fnc{P}_{m}}{\partial x}}
\newcommand{\dxpk}{\bm{p}_{k}'}
\newcommand{\dxpm}{\bm{p}_{m}'}
\newcommand{\tauEx}{\tau} 
\title{Multi-dimensional Summation-By-Parts Operators:\\ General Theory and
  Application to Simplex Elements\thanks{This work was supported by Rensselaer
    Polytechnic Institute and the Natural Sciences and Engineering Research
    Council (NSERC) of Canada}}
\author{Jason~E. Hicken\footnotemark[2]\ \footnotemark[5]\ 
\and David~C.~Del~Rey~Fern\'andez\footnotemark[3]\ \footnotemark[6]
\and David~W.~Zingg\footnotemark[4]\ \footnotemark[6]}
\begin{document}

\maketitle
\slugger{sisc}{xxxx}{xx}{x}{x--x}

\renewcommand{\thefootnote}{\fnsymbol{footnote}}

\footnotetext[2]{Assistant Professor ({\tt hickej2@rpi.edu})}
\footnotetext[3]{Postdoctoral Fellow ({\tt dcdelrey@gmail.com})}
\footnotetext[4]{Professor and Director ({\tt dwz@oddjob.utias.utoronto.ca})}
\footnotetext[5]{Department of Mechanical, Aerospace, and Nuclear Engineering,
  Rensselaer Polytechnic Institute, Troy, New York, United States}
\footnotetext[6]{Institute for Aerospace Studies, University of Toronto,
  Toronto, Ontario, M3H 5T6, Canada}

\renewcommand{\thefootnote}{\arabic{footnote}}

\begin{abstract}
Summation-by-parts (SBP) finite-difference discretizations share many attractive
properties with Galerkin finite-element methods (FEMs), including time stability
and superconvergent functionals; however, unlike FEMs, SBP operators are not
completely determined by a basis, so the potential exists to tailor SBP
operators to meet different objectives.  To date, application of high-order SBP
discretizations to multiple dimensions has been limited to tensor product
domains.  This paper presents a definition for multi-dimensional SBP
finite-difference operators that is a natural extension of one-dimensional SBP
operators.  Theoretical implications of the definition are investigated for the
special case of a diagonal norm (mass) matrix.  In particular, a diagonal-norm
SBP operator exists on a given domain if and only if there is a cubature rule
with positive weights on that domain and the polynomial-basis matrix has full
rank when evaluated at the cubature nodes.  Appropriate
simultaneous-approximation terms are developed to impose boundary conditions
weakly, and the resulting discretizations are shown to be time stable.  Concrete
examples of multi-dimensional SBP operators are constructed for the triangle and
tetrahedron; similarities and differences with spectral-element and
spectral-difference methods are discussed.  An assembly process is described
that builds diagonal-norm SBP operators on a global domain from element-level
operators.  Numerical results of linear advection on a doubly periodic domain
demonstrate the accuracy and time stability of the simplex operators.
\end{abstract}

\begin{keywords}
summation-by-parts, finite-difference method, unstructured grid,
spectral-element method, spectral-difference method, mimetic discretization
\end{keywords}

\begin{AMS}
65N06, 65M60, 65N12
\end{AMS}

\pagestyle{myheadings}
\thispagestyle{plain}
\markboth{J. E. HICKEN, D. C. DEL~REY~FERN\'ANDEZ, AND D. W. ZINGG}{MULTI-DIMENSIONAL SBP OPERATORS}

\section{Introduction}

Summation-by-parts (SBP) operators are high-order finite-difference schemes that
mimic the symmetry properties of the differential operators they
approximate~\cite{Kreiss1974}.  Respecting such symmetries has important
implications; in particular, they enable SBP discretizations that are both
time stable and high-order accurate~\cite{Carpenter1999, Yee2002,
  Nordstrom2006}\ignore{other references that focus on time-stable SBP-SAT
  methods}, properties that are essential for robust, long-time simulations of
turbulent flows~\cite{Morinishi1998, Yee2000}.

Most existing SBP operators are one-dimensional~\cite{Strand1994, Mattsson2004b,
  Svard2004b, Mattsson2012}\ignore{GSBP, other references that focus on SBP
  operators themselves} and are applied to multi-dimensional problems using a
multi-block tensor-product formulation~\cite{Svard2005, Hicken2008,
  Nordstrom2009}\ignore{other MB examples}.  Like other tensor-product methods,
the restriction to multi-block grids complicates mesh generation and adaptation,
and it limits the geometric complexity that can be considered in practice.

The limitations of the tensor-product formulation motivate our interest in
generalizing SBP operators to unstructured grids.  There are two ways this
generalization has been pursued in the literature: 1) construct global SBP
operators on an arbitrary distribution of nodes, or; 2) construct SBP operators
on reference elements and assemble a global discretization by coupling these
smaller elements.

The first approach is appealing conceptually, and it is certainly viable for
second-order accurate SBP schemes.  For example, Nordstr\"om
\etal~\cite{Nordstrom2003} showed that the vertex-centered second-order-accurate
finite-volume scheme\footnote{On simplices, the vertex-centered finite-volume
  scheme is equivalent to a mass-lumped $p=1$ finite-element discretization} has
a multi-dimensional SBP property, even on unstructured grids; however, the first
approach presents challenges when constructing high-order operators.  Kitson
\etal~\cite{Kitson2003} showed that, for a given stencil width and design
accuracy, there exist grids for which no stable, diagonal-norm SBP operator
exists.  Thus, building stable high-order SBP operators on arbitrary node
distributions may require unacceptably large stencils.  When SBP operators do
exist for a given node distribution, they must be determined globally by solving
a system of equations, in general.  The global nature of these SBP operators is
exemplified in the mesh-free framework of Chiu \etal \cite{Chiu2012, Chiu2011}.

The second approach --- constructing SBP operators on reference elements and
using these to build the global discretization --- is more common and presents
fewer difficulties.  The primary challenge here is to extend the one-dimensional
SBP operators of Kreiss and Scherer \cite{Kreiss1974} to a broader set of
operators and domains.  The existence of such operators, at least in the
dense-norm case\footnote{In this paper, norm matrix is synonymous with mass
  matrix.}, was established by Carpenter and Gottlieb \cite{Carpenter1996}.
They proved that operators with the SBP property can be constructed from the
Lagrangian interpolant on nearly arbitrary nodal distributions, which is
practically feasible on reference elements with relatively few nodes.  More
recently, Gassner \cite{Gassner2013} showed that the discontinuous
spectral-element method is equivalent to a diagonal-norm SBP discretization when
the Legendre-Gauss-Lobatto nodes are used with a lumped mass matrix.

Of particular relevance to the present work is the extension of the SBP concept
by Del Rey Fernandez \etal \cite{DCDRF2014}.  They introduced a generalized
summation-by-parts (GSBP) definition for arbitrary node distributions on
one-dimensional elements, and these ideas helped shape the definition of SBP
operators presented herein.

Our first objective in the present work is to develop a suitable definition for
multi-dimensional SBP operators on arbitrary grids and to characterize the
resulting operators theoretically.  We note that the discrete-derivative
operator presented in~\cite{Chiu2012} is a possible candidate for defining
(diagonal-norm) multi-dimensional SBP operators; however, it lacks properties of
conventional SBP operators that we would like to retain, such as the accuracy of
the discrete divergence theorem~\cite{Hicken2013}.

Our second objective is to provide a concrete example of multi-dimensional
\linebreak diagonal-norm SBP operators on non-tensor-product domains.  We focus
on diagonal-norm operators, because they are better suited to discretizations
that conserve non-quadratic invariants~\cite{Kitson2003}; they are also more
attractive than dense norms for explicit time-marching schemes.  We construct
diagonal-norm SBP operators for triangular and tetrahedral elements.  The
resulting operators are similar to those used in the nodal
triangular-spectral-element method~\cite{Cohen2001, Mulder2001, Giraldo2006}.
Unlike the spectral-element method based on cubature points, the SBP method is
not completely specified by a polynomial basis; we use the resulting freedom to
enforce the summation-by-parts property, which leads to provably time-stable
schemes.

The remaining paper is structured as follows.  Section~\ref{sec:preliminaries}
presents notation and the proposed definition for multi-dimensional SBP
operators.  We study the theoretical implications of the proposed definition in
Section~\ref{sec:theory}.  We then describe, in Section~\ref{sec:construct}, how
to construct diagonal-norm SBP operators for the triangle and tetrahedron.
Section~\ref{sec:construct} also establishes that SBP operators on subdomains
can be assembled into SBP operators on the global domain.  Results of applying
the triangular SBP operators to the linear advection equation are presented in
Section~\ref{sec:results}.  Conclusions are given in Section~\ref{sec:conclude}.

\section{Preliminaries}\label{sec:preliminaries} 

To make the presentation concise, we concentrate on multi-dimensional SBP
operators in two dimensions; the extension to higher dimensions follows in a
straightforward manner.  Furthermore, we present definitions and
theorems for operators in the $x$ coordinate direction only; the corresponding definitions and theorems for the $y$ coordinate direction follow directly from those in the $x$ direction.

\subsection{Notation}\label{sec:notation}

We consider discretized derivative operators defined on a set of $n$ nodes, $S=\left\{(x_{i},y_{i})\right\}_{i=1}^{n}$.  Capital letters with script type are used to denote continuous functions.  For example, $\fnc{U}(x) \in L^{2}(\Omega)$ denotes a square-integrable function on the domain $\Omega$.  We use lower-case bold font to denote the restriction of functions to the nodes. Thus, the restriction of $\fnc{U}$ to $S$ is given by
\begin{equation*}
\bm{u} = \left[\fnc{U}(x_{1},y_{1}),\dots,\fnc{U}(x_{n},y_{n})\right]\Tr.
\end{equation*}

\ignore{
Following this convention, the nodes themselves will often be represented by the two vectors $\bm{x}=\left[x_{1},\dots,x_{n}\right]\Tr$ and $\bm{y}=\left[y_{1},\dots,y_{n}\right]\Tr$.  More generally, the restriction of monomials to $S$ is represented by $\bm{x}^{j} = \left[x_{1}^{j},\dots,x_{n}^{j}\right]\Tr$ and  $\bm{y}^{j} = \left[y_{1}^{j},\dots,y_{n}^{j}\right]\Tr$, with the convention that $\bm{x}^{j}=\bm{y}^{j}=\bm{0}$ if $j<0$.  We use the element-wise Hadamard product, denoted $\circ$, to represent the product of functions restricted to the nodes.  For example, the restriction of $x^{a}y^{b}$ to $S$ is given by $\bm{x}^{a}\circ\bm{y}^{b}$.
}

Several theorems and proofs make use of the monomial basis.  For two spatial
variables, the size of the polynomial basis of total degree $p$ is
\begin{equation*}
  \nmin{p} \equiv \frac{(p+1)(p+2)}{2}.
\end{equation*}
More generally, $\nmin{p} = \binom{p+d}{d}$, where $d$ is the spatial dimension.
We use the following single-subscript notation for monomial basis functions:
\begin{equation*}
  \fpk(x,y) \equiv x^{i}y^{j-i}, \qquad
  k = j(j+1)/2 +i+1, \quad 
  \forall\; j \in \{ 0,1,\ldots,p\}, \quad i \in \{0,1,\ldots,j\}.
\end{equation*}
We will frequently evaluate $\fpk$ and $\partial \fpk/\partial x$ at the nodes
$S$, so we introduce the notation 
\begin{align*}
  \pk &\equiv \left[ \fpk(x_{1},y_{1}),\dots,\fpk(x_{n},y_{n})\right]\Tr,\\
\text{and}\qquad
  \dxpk &\equiv \left[ \frac{\partial \fpk}{\partial x}(x_{1},y_{1}),\dots,
    \frac{\partial \fpk}{\partial x}(x_{n},y_{n})\right]\Tr.
\end{align*}

\ignore{JEH: I did not see where we needed to use the $_h$ notation.

Vectors with a subscript $h$, for example $\bm{u}_{h}\in \mathbb{R}^{n}$, represent the solution to a system of discrete or semi-discrete equations.
}

Finally, matrices are represented using capital letters with sans-serif font;
for example, the first-derivative operators with respect to $x$ and $y$ are
represented by the matrices $\Dx$ and $\Dy$, respectively.  Entries of a matrix
are indicated with subscripts, and we follow
Matlab\textsuperscript{\textregistered}-like notation when referencing
submatrices.  For example, $\mat{A}_{:,j}$ denotes the $j^{\text{th}}$ column of
matrix $\mat{A}$, and $\mat{A}_{:,1:k}$ denotes its first $k$ columns.

\ignore{ I removed the this definition, which we do not need to include.

The $L_{2}$  inner product and norm on the domain $\Omega$ are defined as
\begin{equation}
\begin{array}{lr}
(\fnc{U},\fnc{V}) = \displaystyle\int_{\Omega}\fnc{U}\fnc{V}\mr{d}\Omega,&||\fnc{U}||^{2} = \displaystyle\int_{\Omega}\fnc{U}^{2}\mr{d}\Omega.
\end{array}
\end{equation}
A discrete inner product and norm have the form
\begin{equation}\label{SBPnorm1}
\begin{array}{lr}
(\bm{u},\bm{v})_{\M} = \bm{u}\Tr\M\bm{v},&||\bm{u}||^{2}_{\M} = \bm{u}\Tr\M\bm{u},
\end{array}
\end{equation}
where $\M$ must be symmetric and positive-definite.
}

\subsection{Multi-dimensional SBP operator definition}

We propose the following definition for $\Dx$, the SBP first-derivative operator
with respect to $x$.  An analogous definition holds for $\Dy$ and, in
three-dimensions, $\Dz$.  Definition \ref{DEFSBPgen2} is a natural extension of
the definition of GSBP operators proposed in \cite{DCDRF2014}, which itself
extends the classical SBP operators introduced by Kreiss and Scherer
\cite{Kreiss1974}.

\begin{definition}\label{DEFSBPgen2}
  {\bf Two-dimensional summation-by-parts operator:} Consider an open and bounded domain $\Omega\subset\mathbb{R}^{2}$ with a piecewise-smooth boundary $\Gamma$.  The matrix $\Dx$ is a degree $p$ SBP approximation to the first derivative $\frac{\partial}{\partial x}$ on the nodes $S=\left\{(x_{i},y_{i})\right\}_{i=1}^{n}$ if
  \begin{remunerate}
  \item $\Dx\pk = \dxpk,\qquad \forall\; k \in \{ 1,2,\ldots,\nmin{p} \}$; 
    \label{sbp:accuracy}
  \item $\Dx = \M^{-1}\Qx$, where $\M$ is symmetric positive-definite, and; \label{sbp:H}
  \item $\Qx = \QAx + \frac{1}{2}\Ex$, where $\QAx\Tr=-\QAx$, $\Ex\Tr=\Ex$, and
    $\Ex$ satisfies
    \begin{equation*}
      \pk\Tr\Ex\pM=\displaystyle\oint_{\Gamma}\fpk\fpm n_{x}
      \mr{d}\Gamma,\qquad \forall\; k,m \in \{ 1,2,\ldots,\nmin{\tauEx} \},
    \end{equation*}
    \label{sbp:Ex} 
  \end{remunerate}
  where $\tauEx\ge p$ and $n_{x}$ is the $x$ component of
  $\bm{n}=\left[n_{x},n_{y}\right]\Tr$, the outward pointing unit normal on
  $\Gamma$.
\end{definition}

Before studying the implications of Definition~\ref{DEFSBPgen2} in
Section~\ref{sec:theory}, it is worthwhile to motivate and elaborate on the
three properties in the definition.

Property~\ref{sbp:accuracy} ensures that $\Dx$ is an accurate approximation to
the first partial derivative with respect to $x$.  The operator must be exact
for polynomials of total degree less than or equal to $p$, so at least
$\nmin{p}$ nodes are necessary to satisfy property~\ref{sbp:accuracy}.

\begin{remark}
We emphasize that a polynomial basis is not used to define the solution in SBP
methods, in contrast with the piecewise polynomial expansions found in
finite-element methods.  We adopt the monomial basis only to define the accuracy
conditions concisely and avoid cumbersome Taylor-series expansions.
\end{remark}

The matrix $\M$ must be symmetric positive-definite to guarantee stability:
without property~\ref{sbp:H}, the discrete ``energy'', $\bm{u}\Tr \M \bm{u}$,
could be negative when $\bm{u}\Tr\bm{u} > 0$, and vice versa.  The so-called
norm matrix $\M$ can be interpreted as a mass matrix, \ie
\begin{equation*}
  \M_{i,j} = \int_{\Omega} \phi_{i}(x,y) \phi_{j}(x,y) \mr{d} \Omega,
\end{equation*}
but it is important to emphasize that SBP operators are finite-difference
operators, and there is no (known) closed-form expression for an SBP nodal basis
$\{\phi_{i}\}_{i=1}^{n}$, in general.  In the diagonal norm case, we shall show
that another interpretation of $\M$ is as a cubature rule.

Property~\ref{sbp:Ex} is needed to mimic integration by parts (IBP).  Recall
that the IBP formula for the $x$ derivative is
\begin{equation*}
\int_{\Omega}\fnc{V}\frac{\partial\fnc{U}}{\partial x}\mr{d}\Omega 
+ \int_{\Omega}\fnc{U}\frac{\partial\fnc{V}}{\partial x}\mr{d}\Omega
= \oint_{\Gamma}\fnc{V}\fnc{U}n_{x}\mr{d}\Gamma.
\end{equation*}
The discrete version of the IBP formula, which follows from
property~\ref{sbp:Ex}, is
\begin{equation*}
\bm{v}\Tr\M\Dx\bm{u} + \bm{u}\Tr\M\Dx\bm{v} =\bm{v}\Tr\Ex\bm{u},
\qquad\forall\;\bm{v},\bm{u}\in\mathbb{R}^{n}.
\end{equation*}
There is a one-to-one correspondence between each term in the IBP formula and
its SBP proxy.  For example, it is clear from property~\ref{sbp:Ex} that
$\bm{v}\Tr \Ex\bm{u}$ approximates the surface integral in IBP to order $\tau$.
Moreover, in Section~\ref{sec:theory} we show that diagonal-norm SBP operators
also approximate the left-hand side of IBP.

\ignore{JEH: not sure this is needed.

In this paper, we discuss the degree of various bilinear forms, by which we mean:
\begin{definition}\label{degbilinear}
Consider the continuous bilinear form 
\begin{equation}\label{bilinear}
\left(\fnc{V},\fnc{U}\right)
\end{equation}
and the discrete approximation 
\begin{equation}\label{Dbilinear}
\left(\bm{v},\bm{u}\right)_{d},
\end{equation}
where $\bm{v}$ and $\bm{u}$ are the restrictions of the continuous functions $\fnc{V}$ and $\fnc{U}$ onto the grid. Then approximation \eqref{Dbilinear} of \eqref{bilinear} is said to be of degree $p$ if it is exact for the monomials $\fnc{V}=x^{\bx}y^{\by}$, $\fnc{U}=x^{\ax}y^{\ay}$ for all $\ax+\ay+\bx+\by\leq p$; that is,
\begin{equation}
\left(\bm{x}^{\bx}\circ\bm{y}^{\by},\bm{x}^{\ax}\circ\bm{y}^{\ay}\right)_{d}=\left(x^{\bx}y^{\by},x^{\ax}y^{\ay}\right),\forall\;\ax+\ay+\bx+\by\leq p.
\end{equation}
\end{definition}

Lemma \ref{Lemma:SBPx} and the degree conditions \eqref{accuracyx} do not impose any restriction on the relation between the nodal distribution and the volume under consideration. That is, nodes can be contained within and outside of the volume, and nodes need not lie on the surface. These conditions are very general, and it is necessary to determine additional constraints so that energy estimates can be constructed. For example, in one dimension, one can proceed by defining $\Ex=\mat{E}_{x_{\mr{R}}}-\mat{E}_{x_{\mr{L}}}$ such that $\mat{E}_{x_{\mr{R}}}$ and $\mat{E}_{x_{\mr{L}}}$ are symmetric positive semi-definite. Using this restriction, it is possible to construct stable discretizations, which is the approach taken in Refs.\ \citenum{Carpenter1994}, \citenum{Chertock1998}, \citenum{Abarbanel2000}, \citenum{Abarbanel2000b} for one-dimensional finite-difference operators on equispaced nodal distributions; also see the work in Refs.\ \citenum{Reichert2011} and \citenum{Reichert2012} on overlapping SBP operators. This approach presents an interesting possibility for multi-dimensional SBP operators worth further consideration.
Alternatively, we look to construct SBP operators such that the individual components of \eqref{SBPx} are higher-order approximations to the continuous analogues, much in the same way as classical finite-difference-SBP operators originally proposed by Kreiss and Scherer \cite{Kreiss1974} and the generalized SBP operators in Ref.\ \citenum{DCDRF2014}. Here, we constrain the definition of $\Ex$ such that $\bm{v}\Tr\Ex\bm{u}$ is an approximation of the surface integral in \eqref{IBPx}. In extending the SBP concept to finite-volume method, Refs.\ \citenum{Nordstrom2001} and \citenum{Nordstrom2003} have also restricted the definition of $\Ex$ to approximate surface integral in \eqref{IBPx}. These ideas lead to the following definition:
}

\section{Analysis of diagonal-norm, multi-dimensional, summation-by-parts operators}\label{sec:theory}
In this section, we determine the implications of Definition \ref{DEFSBPgen2} on
the constituent matrices of a multi-dimensional SBP operator and whether or not
such operators exist.  We also investigate the time stability of discretizations
based on multi-dimensional SBP operators.  The focus is on diagonal-norm
operators; however, the ideas presented here can be extended to dense-norm
operators, \ie where the matrix $\M$ is not diagonal.

The following lemma will prove useful in the sequel.  It follows immediately
from properties \ref{sbp:accuracy} and \ref{sbp:Ex}, so we state it without
proof.

\begin{lemma}[compatibility] Let $\Dx = \M^{-1}(\QAx + \frac{1}{2} \Ex)$ be an SBP operator of degree $p$.  Then we have the following set of relations:
\begin{equation}\label{MSBP:compatx}
  \pM\Tr\M\dxpk+\pk\Tr\M\dxpm=\pM\Tr\Ex\pk,\qquad
  \forall\; k,m \in \{ 1,2,\ldots,\nmin{p} \}.
\end{equation}
\end{lemma}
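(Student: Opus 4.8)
The plan is to reduce the left-hand side of \eqref{MSBP:compatx} to the product $\pM\Tr\Ex\pk$ by pure linear algebra, exploiting the accuracy of $\Dx$ together with the symmetry structure of $\QAx$ and $\Ex$. The starting observation is that property~\ref{sbp:accuracy} lets me replace the discrete derivative $\dxpk$ by $\Dx\pk$ whenever $k \le \nmin{p}$; combined with property~\ref{sbp:H}, this gives $\M\dxpk = \M\Dx\pk = \Qx\pk$, and likewise $\M\dxpm = \Qx\pM$. Substituting these identities, the two terms on the left of \eqref{MSBP:compatx} become $\pM\Tr\Qx\pk$ and $\pk\Tr\Qx\pM$, respectively.

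Next I would insert the decomposition $\Qx = \QAx + \tfrac12\Ex$ from property~\ref{sbp:Ex} into each term and group the contributions from the skew-symmetric and symmetric parts. The skew part produces $\pM\Tr\QAx\pk + \pk\Tr\QAx\pM$. Because each summand is a scalar, it equals its own transpose, so $\pk\Tr\QAx\pM = \pM\Tr\QAx\Tr\pk = -\pM\Tr\QAx\pk$ using $\QAx\Tr = -\QAx$; the two skew contributions therefore cancel exactly. The symmetric part produces $\tfrac12\pM\Tr\Ex\pk + \tfrac12\pk\Tr\Ex\pM$, and the same scalar-transpose argument with $\Ex\Tr = \Ex$ shows $\pk\Tr\Ex\pM = \pM\Tr\Ex\pk$, so the two halves combine into the single term $\pM\Tr\Ex\pk$. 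Collecting the surviving term yields \eqref{MSBP:compatx}.

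The only point requiring care is the range of the indices: the substitution $\M\dxpk = \Qx\pk$ relies on property~\ref{sbp:accuracy}, which is guaranteed only for $k \le \nmin{p}$, and this is precisely why \eqref{MSBP:compatx} is asserted for $k,m \in \{1,\ldots,\nmin{p}\}$ rather than over the larger range $\{1,\ldots,\nmin{\tauEx}\}$ on which $\Ex$ reproduces the boundary integral. There is no genuine analytical obstacle here; the result is an immediate algebraic consequence of the definition, which is why it can be stated without proof. If one wishes, it is worth remarking that, since $\tauEx \ge p$ implies $\nmin{p} \le \nmin{\tauEx}$, the right-hand side $\pM\Tr\Ex\pk$ in fact equals the exact boundary integral $\oint_{\Gamma}\fpk\fpm n_{x}\,\mr{d}\Gamma$ for these indices, so \eqref{MSBP:compatx} is precisely the discrete integration-by-parts identity evaluated on the monomial basis.
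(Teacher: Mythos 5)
Your proof is correct and is exactly the ``immediate'' argument the paper has in mind when it states the lemma without proof: substitute $\M\dxpk = \M\Dx\pk = \Qx\pk$ via the accuracy property, split $\Qx = \QAx + \tfrac{1}{2}\Ex$, and let the skew part cancel and the symmetric part combine under scalar transposition. Your closing remarks on the index range and on why only the symmetry of $\Ex$ (not its boundary-integral accuracy) is needed are accurate as well.
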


We refer to \eqref{MSBP:compatx} as the compatibility equations for the $x$ derivative; $\M$ must simultaneously satisfy analogous relations for $\Ey$. The relation between $\M$ and $\Ex$ was first derived by Kreiss and Scherer \cite{Kreiss1974} and Strand \cite{Strand1994} to construct a theory for one-dimensional classical finite-difference-SBP operators. Furthermore, Del Rey Fern\'andez \etal \cite{DCDRF2014} have used these relations to extend the theory of such operators to a broader set.  What is presented in this paper is a natural extension of those works to multi-dimensional operators, and the derivation of \eqref{MSBP:compatx} follows in a straightforward manner from any of the mentioned works.

Our first use of the compatibility equations is to prove that, in the
diagonal-norm case, the multi-dimensional SBP definition conceals a cubature rule
with positive weights.
\begin{theorem}\label{L2diagH}
Let $\Dx = \M^{-1}\Qx$ be a degree $p$, diagonal-norm, multi-dimensional SBP
operator on the domain $\Omega$.  Then the nodes $S =
\{(x_{i},y_{i})\}_{i=1}^{n}$ and the diagonal entries of $\M$ form a degree
$2p-1$ cubature rule on $\Omega$.
\end{theorem}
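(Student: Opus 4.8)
The plan is to use the compatibility equations \eqref{MSBP:compatx} to show that the diagonal entries of $\M$, call them $h_i \equiv \M_{i,i}$, reproduce exact integrals of polynomials up to total degree $2p-1$, and that these weights are positive. The key observation is that, since $\M$ is diagonal, the quadratic form $\pk\Tr\M\pM = \sum_{i=1}^n h_i \fpk(x_i,y_i)\fpm(x_i,y_i)$ is precisely the candidate cubature rule applied to the product $\fpk\fpm$, which is a polynomial of total degree $\deg\fpk + \deg\fpm$. So the cubature accuracy statement amounts to showing $\pk\Tr\M\pM = \int_\Omega \fpk\fpm\,\mr{d}\Omega$ whenever $\deg\fpk + \deg\fpm \le 2p-1$.

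First I would exploit the structure $\fpk\fpm = x^{a}y^{b}$ for appropriate exponents, and reduce the integral of a monomial $\int_\Omega \mr{d}\Omega$ of one degree higher to a surface integral using the exact IBP identity: if $\fnc{P}$ is a polynomial with $\partial\fnc{P}/\partial x = \fnc{Q}$, then $\int_\Omega \fnc{Q}\,\mr{d}\Omega = \oint_\Gamma \fnc{P}\,n_x\,\mr{d}\Gamma$. I would then match this against the discrete analogue. Specifically, take the compatibility relation \eqref{MSBP:compatx} and read it as an identity among diagonal-weighted sums: the left side $\pM\Tr\M\dxpk + \pk\Tr\M\dxpm$ is a diagonal cubature applied to $\fpm\,\partial\fpk/\partial x + \fpk\,\partial\fpm/\partial x = \partial(\fpk\fpm)/\partial x$, while the right side $\pM\Tr\Ex\pk$ equals the exact surface integral $\oint_\Gamma \fpk\fpm n_x\,\mr{d}\Gamma$ by property~\ref{sbp:Ex}. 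Thus the diagonal cubature integrates $\partial(\fpk\fpm)/\partial x$ exactly, matching the continuous IBP, for all $k,m$ with degrees up to $p$. This handles all monomials of degree up to $2p-1$ that arise as an $x$-derivative; the analogous compatibility equations for the $y$ derivative cover the remaining monomials, and together these span all polynomials of total degree $\le 2p-1$ by a straightforward induction on degree (the constant is fixed by choosing $\fpk=\fpm=1$ appropriately, or handled as a base case).

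The positivity of the weights is the second half and I expect it to be the more delicate step. The natural argument is to test the discrete energy against a well-chosen function: for each node $i$, the diagonal entry $h_i = \bm{e}_i\Tr\M\bm{e}_i$, where $\bm{e}_i$ is the $i$th standard basis vector, is strictly positive because $\M$ is symmetric positive-definite. This is in fact immediate once we know $\M$ is diagonal and positive-definite — a diagonal positive-definite matrix has positive diagonal entries — so positivity follows directly from property~\ref{sbp:H} without further work. The main obstacle is therefore not positivity but the bookkeeping in the first part: verifying that the union of the $x$- and $y$-compatibility relations, together with the base cases, genuinely certifies exactness for \emph{every} monomial up to degree $2p-1$, and not just those expressible as a single partial derivative of a degree $\le 2p$ product. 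Carefully organizing the induction on total degree, and confirming that the surface terms on the right always reduce to exact continuous integrals (which requires $\tau \ge p$ so that property~\ref{sbp:Ex} applies to all the needed $k,m$), is where the real care is needed.
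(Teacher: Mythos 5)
Your proposal is correct and follows essentially the same route as the paper: read the compatibility equations with the diagonal $\M$, combine the two terms on the left via the product rule and convert the surface integral on the right to a volume integral via integration by parts, so that the weights integrate $\partial(\fpk\fpm)/\partial x$ exactly, with positivity immediate from property~\ref{sbp:H}. The only difference is that the induction and the $y$-compatibility equations you flag as the delicate step are not actually needed, since every monomial $x^{a}y^{b}$ with $a+b\le 2p-1$ already equals $\frac{1}{a+1}\frac{\partial}{\partial x}\left(x^{a+1}y^{b}\right)$ and $x^{a+1}y^{b}$, having degree at most $2p$, splits into two factors each of degree at most $p$.
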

\begin{proof}
Using property~\ref{sbp:Ex} of Definition~\ref{DEFSBPgen2}, the compatibility
equations become
\begin{equation*}
\sum\limits_{j=1}^{n}\M_{j,j}\left[\fpm\dxfpk + \fpk\dxfpm\right]_{\left(x_{j},y_{j}\right)}=\oint\limits_{\Gamma}\fpm\fpk n_{x}\mr{d}\Gamma,\qquad
\forall\; k,m \in \{ 1,2,\dots,\nmin{p} \}.
\end{equation*}
Using the chain rule on the left and integration by parts on the right results
in
\begin{equation}\label{proofH}
\sum\limits_{j=1}^{n}\M_{j,j} \left.\frac{\partial\fpm\fpk}{\partial x}\right|_{\left(x_{j},y_{j}\right)}=\int\limits_{\Omega}\frac{\partial\fpm\fpk}{\partial x}\mr{d}\Omega,\qquad  \forall\; k,m \in \{ 1,2,\dots,\nmin{p} \}.
\end{equation}
Since $\fpk$ and $\fpm$ are monomials of degree at most $p$, it follows that
$\partial\left(\fpm\fpl\right)/\partial x$ is a scaled monomial of degree at
most $2p-1$; thus, by considering all of the combinations of $k$ and $m$,
\eqref{proofH} implies
\begin{equation*}
\sum\limits_{j=1}^{n} \M_{j,j} \fpk\left(x_{j},y_{j}\right)=\int\limits_{\Omega}\fpk\mr{d}\Omega,\quad \forall\; k \in \{ 1,2,\dots,\nmin{2p-1}\},
\end{equation*}
which are the conditions for a degree $2p-1$ cubature.
\ignore{
This result follows in an analogous fashion to the one-dimensional result;
see~\cite[\S 4.1]{DCDRF2014}. Briefly, one starts with the compatibility
equations for the $x$ coordinate \eqref{MSBP:compatx} and proves the result. It
then follows that the compatibility equations for the $y$ coordinate are also
satisfied.  \qquad
}
\qquad\end{proof}
\ignore{
A direct consequence of Theorem \ref{L2diagH} is 
\begin{corollary}\label{Corollary:L2DiagH}
  The nodal coordinates, $S=\left\{(x_{i},y_{i})\right\}_{i=1}^{n}$, and
  diagonal entries of $\M$ from a diagonal-norm SBP operator form a cubature
  rule with positive weights that is exact for polynomials of degree $2p-1$.
\end{corollary}}

\ignore{
We need to define a number of matrices that will be used in the next several theorems. Consider the node set $S=\left\{(x_{i},y_{i})\right\}_{i=1}^{n}$ with $n \ge
  \nmin{p}$ nodes, and define the generalized Vandermonde matrix $\mat{V} \in
  \mathbb{R}^{n\times \nmin{p}}$ whose columns are the monomial basis
  evaluated at the nodes;
\begin{equation*}
  \mat{V}_{:,k} =\pk, 
  \qquad k =1,2,\dots,\nmin{p}.
\end{equation*}
Furthermore, let $\mat{V}_{x} \in \mathbb{R}^{n\times \nmin{p}}$ be the matrix whose columns
  are the $x$ derivatives of the monomial-basis:
  \begin{equation*}
    \left(\mat{V}_{x}\right)_{:,k} =\dxpk,
    \qquad \forall\; k \in \{1,2,\dots,\nmin{p}\}.
  \end{equation*}

The conditions under which a first-derivative operator exist are given in the following theorem:
\begin{theorem}\label{ExistDx}
Consider the node set $S=\left\{(x_{i},y_{i})\right\}_{i=1}^{n}$ with $n \ge
  \nmin{p}$ nodes, then a degree $p$ first derivative operator $\Dx$ exists if and only if the first $\nmin{p}$ columns of the generalized Vandermonde matrix $\mat{V} \in
  \mathbb{R}^{n\times \nmin{p}}$ are linearly independent. 
\end{theorem}
\begin{proof}
For the if condition, we construct a basis for $\mathbb{R}^{n\times n}$ by appending a set of vectors $\mat{W}$ such that $\tilde{\mat{V}}=[\mat{V},\mat{W}]$ has full column rank. We define 
\begin{equation}
\Dx = \left[\mat{V}_{x},\mat{W}_{x}\right]\tilde{V}^{-1},
\end{equation}
where $\mat{W}_{x}\in\mathbb{R}^{n\times(n-\nmin{p})}$ is an arbitrary matrix. Constructed this way, it is clear that $\Dx$ satisfies the degree conditions (\ref{sbp:accuracy}).

Conversely, if the columns of $\mat{V}$ are linearly dependent this implies that there exists a $\bm{y}\neq \bm{0}$ such that $\mat{V}\bm{y}=0$. Inserting this into the accuracy conditions (\ref{sbp:accuracy}) gives
\begin{equation}
\Dx\mat{V}\bm{y}=\bm{0}=\mat{V}_{x}\bm{y}.
\end{equation}
This implies that we have a polynomial of total degree $p$ whose derivative at all the nodes is zero.  But, with the exception of the constant function, the derivatives of the monomials are zero only along $x=0$ and $y=0$ \textcolor{blue}{COMMENT: but $\mat{V}_{x}\bm{y}$ is not a simple monomial}. Unless all the cubature points lie on these lines, we must have $\mat{V} \bm{y} \propto \mathbf{1}$, which is a contradiction.
\end{proof}
}

We now prove one of our central theoretical results, relating the existence of a
diagonal-norm SBP operator to the existence of a cubature rule with positive
weights.
\begin{theorem}\label{ExistDiagH}
  Consider the node set $S=\left\{(x_{i},y_{i})\right\}_{i=1}^{n}$ with $n \ge
  \nmin{p}$ nodes, and define the generalized Vandermonde matrix $\mat{V} \in
  \mathbb{R}^{n\times \nmin{p}}$ whose columns are the monomial basis
  evaluated at the nodes;
\begin{equation*}
  \mat{V}_{:,k} =\pk, \qquad \forall\; k \in \{1,2,\dots,\nmin{p}\}
\end{equation*}
Assume that the columns of $\mat{V}$ are linearly independent.  Then the
existence of a cubature rule of degree at least $2p-1$ with positive weights on
$S$ is necessary and sufficient for the existence of degree $p$ diagonal-norm
SBP operators approximating the first derivatives $\frac{\partial}{\partial x}$
and $\frac{\partial}{\partial y}$ on the node set $S$.
\end{theorem}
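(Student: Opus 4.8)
The statement is a biconditional, so the plan is to treat the two implications separately and to identify the cubature degree $2p-1$ as the hinge that connects them. Necessity is essentially already available: if a degree $p$ diagonal-norm SBP pair exists on $S$, then Theorem~\ref{L2diagH} shows that the nodes together with the diagonal entries of $\M$ form a degree $2p-1$ cubature rule, and the weights are positive because $\M$ is both diagonal and symmetric positive-definite. I would dispatch this direction in a line or two.

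The substance lies in sufficiency. Given positive cubature weights $w_{1},\dots,w_{n}$ exact to degree $2p-1$, I would set $\M=\mr{diag}(w_{1},\dots,w_{n})$, which immediately satisfies property~\ref{sbp:H}. I then need to produce $\Qx=\QAx+\tfrac12\Ex$ meeting properties~\ref{sbp:accuracy} and~\ref{sbp:Ex}. First I would fix a symmetric $\Ex$ realizing the surface conditions of property~\ref{sbp:Ex} with $\tauEx=p$; since $\V$ has full column rank this is explicit, e.g. $\Ex=\V(\V\Tr\V)^{-1}\mat{B}(\V\Tr\V)^{-1}\V\Tr$, where $\mat{B}_{k,m}=\oint_{\Gamma}\fpk\fpm n_{x}\,\mr{d}\Gamma$ is symmetric in $k,m$. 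With $\M$ and $\Ex$ in hand, property~\ref{sbp:accuracy} combined with $\Dx=\M^{-1}\Qx$ is equivalent to the requirement $\QAx\V=\mat{C}$, where the $k$-th column of $\mat{C}$ is $\M\dxpk-\tfrac12\Ex\pk$, with $\QAx$ constrained to be skew-symmetric.

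The crux is therefore whether a skew-symmetric $\QAx$ with $\QAx\V=\mat{C}$ exists. A necessary condition is that $\V\Tr\mat{C}$ be skew-symmetric, and I would verify that this is exactly the compatibility equations~\eqref{MSBP:compatx}: expanding $\pM\Tr\M\dxpk+\pk\Tr\M\dxpm$ as a cubature sum of $\partial(\fpm\fpk)/\partial x$, a polynomial of degree at most $2p-1$, and invoking the divergence theorem returns precisely the boundary integral reproduced by $\Ex$. This is where the degree-$2p-1$ exactness is used essentially. For sufficiency of the skew condition I would complete $\V$ to an invertible $\tV=[\V,\W]$ and solve $\tV\Tr\QAx\tV=\left[\begin{smallmatrix}\V\Tr\mat{C}&-\mat{C}\Tr\W\\ \W\Tr\mat{C}&\mat{S}\end{smallmatrix}\right]$ for any skew-symmetric $\mat{S}$ (e.g. $\mat{S}=\mat{0}$); the right-hand side is skew-symmetric by the verified condition, so $\QAx$ is skew-symmetric, and reading off the first block of columns gives $\QAx\V=\mat{C}$ since $\tV^{-1}\V=\left[\begin{smallmatrix}\mat{I}\\ \mat{0}\end{smallmatrix}\right]$. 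Setting $\Dx=\M^{-1}(\QAx+\tfrac12\Ex)$ then satisfies all three properties. The same construction with the \emph{same} $\M$ yields $\Dy$; sharing the norm is legitimate because a single degree-$2p-1$ cubature makes the compatibility equations hold simultaneously in both coordinate directions, so the analogous $\V\Tr\mat{C}$ for $y$ is likewise skew-symmetric.

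I expect the main obstacle to be the skew-symmetric solvability step: recognizing that the degree-$2p-1$ exactness is not incidental but is exactly the condition guaranteeing that $\V\Tr\mat{C}$ is skew-symmetric, and then arguing that this necessary condition is also sufficient through the basis-completion argument. By contrast, the explicit construction of $\Ex$ and the verification of the routine accuracy bookkeeping are comparatively mechanical.
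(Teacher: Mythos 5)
Your proposal is correct and follows essentially the same route as the paper's proof: necessity via Theorem~\ref{L2diagH}, the norm taken from the cubature weights, completion of $\V$ to an invertible $\tV=[\V\;\W]$, the degree-$2p-1$ exactness used to show that $\V\Tr\M\Vx+\Vx\Tr\M\V$ equals the boundary-integral matrix (so the leading block of $\tV\Tr\QAx\tV$ is skew-symmetric), and the remaining blocks filled in using the leftover freedom. The only differences are cosmetic parameterizations --- you build $\Ex$ by an explicit projection formula and prescribe $\tV\Tr\QAx\tV$ directly, whereas the paper embeds $\tEx$ in a block matrix conjugated by $\tV^{-1}$ and writes $\QAx=\M\tVx\tV^{-1}-\tfrac12\Ex$ with free $\Wx$, $\Fx$, $\Gx$ --- but the logical skeleton and the key use of the cubature degree are identical.
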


\begin{proof}
  The necessary condition on $\M$ follows immediately from Theorem \ref{L2diagH}.  To
  prove sufficiency, we must show that, given a cubature rule, we can construct
  an operator that satisfies properties \ref{sbp:accuracy}--\ref{sbp:Ex} of
  Definition~\ref{DEFSBPgen2} on the same node set as the cubature rule.

  Before proceeding, we introduce some matrices that facilitate the proof.  Let
  $\mat{V}_{x} \in \mathbb{R}^{n\times \nmin{p}}$ be the matrix whose columns
  are the $x$ derivatives of the monomial-basis:
  \begin{equation*}
    \left(\mat{V}_{x}\right)_{:,k} =\dxpk,
    \qquad \forall\; k \in \{1,2,\dots,\nmin{p}\}.
  \end{equation*}
  We construct an invertible matrix $\tV \in \mathbb{R}^{n\times n}$ by
  appending a set of vectors, $\W \in \mathbb{R}^{n\times (n-\nmin{p})}$, that
  are linearly independent amongst themselves and to the vectors in $\V$:
  \begin{equation*}
    \tV \equiv \begin{bmatrix} \V & \W \end{bmatrix}.
  \end{equation*}
  Similarly, we define
  \begin{equation*}
    \tVx \equiv \begin{bmatrix} \Vx & \Wx \end{bmatrix},
  \end{equation*}
  where $\Wx \in \mathbb{R}^{n\times (n-\nmin{p})}$ is matrix that will be specified later.
  Below, we use the degrees of freedom in $\Wx$ to satisfy the SBP definition.

  Let $\M$ be the diagonal matrix whose entries are the cubature weights ordered
  consistently with the cubature node set $S$.  Since the cubature weights are
  positive, property~\ref{sbp:H} is satisfied.

  Next, we use the cubature to construct a suitable $\Ex$.  Using $\mat{V}$
  and $\mat{V}_{x}$, we define the symmetric matrix
  \begin{equation}\label{tExdeff}
    \tEx \equiv \mat{V}\Tr \M \mat{V}_{x} + \mat{V}_{x}\Tr \M \mat{V}.
  \end{equation}
  Since $\mat{V}$ and $\mat{V}_{x}$ are polynomials of degree $p$ and $p-1$,
  respectively, evaluated at the nodes, the cubature is exact for the right-hand
  side of \eqref{tExdeff}:
  \begin{equation}
    \left(\tEx\right)_{k,m} =
    \int_{\Omega}\dxfpk\fpm \mr{d}\Omega
    + \int_{\Omega} \fpk\dxfpm \mr{d}\Omega = \oint_{\Gamma} \fpk\fpm n_{x}\mr{d}\Gamma,
    \label{eq:tildeEx}
  \end{equation}
  $\forall\; k,m \in \{ 1,2,\dots,\nmin{p} \}$. 
  Now we can define the boundary operator
  \begin{equation*}
    \Ex \equiv \tV\invTr 
    \begin{bmatrix}
      \tEx & \Fx\Tr \\
      \Fx & \Gx 
    \end{bmatrix} \tV^{-1},
  \end{equation*}
  where $\Fx \in \mathbb{R}^{(n-\nmin{p})\times \nmin{p}}$ and $\Gx =
  \Gx\Tr \in \mathbb{R}^{(n-\nmin{p})\times (n-\nmin{p})}$.  It follows from
  this definition that $\Ex$ is symmetric.  Moreover, together with
  \eqref{eq:tildeEx}, this definition implies
  \begin{equation*}
    \left( \V\Tr \Ex \V \right)_{k,m} = \left( \tEx \right)_{k,m}
    = \oint_{\Gamma} \fpk\fpm n_{x}\mr{d}\Gamma,\qquad
    \forall\; k,m \in \{1,2,\dots,\nmin{p} \},
  \end{equation*}
  so $\Ex$ satisfies the accuracy condition of property~\ref{sbp:Ex}.  

  Finally, let
  \begin{equation}
    \QAx \equiv \M \tVx \tV^{-1} - \frac{1}{2} \Ex.\label{eq:Qx}
  \end{equation}
  The accuracy conditions, which are equivalent to showing $\Dx \V = \Vx$,
  follow immediately from this definition of $\QAx$:
  \begin{equation*}
    \Dx \V = \M^{-1}\left( \QAx + \frac{1}{2} \Ex \right) \V 
    = \M^{-1} \left( \M \tVx \tV^{-1} \right) \V = \Vx;
  \end{equation*}
  thus, property~\ref{sbp:accuracy} is satisfied.

  Our remaining task is to show that $\QAx$ can be constructed to be
  antisymmetric.  If we can show that
  \begin{equation*}
    \tV\Tr \QAx \tV =
    \begin{bmatrix}
      \V\Tr \QAx \V & \V\Tr \QAx \W \\
      \W\Tr \QAx \V & \W\Tr \QAx \W 
    \end{bmatrix}
  \end{equation*}
  can be made antisymmetric, then the result will follow for $\QAx$.  Consider
  the first block in the $2\times 2$ block matrix above, \ie
  \begin{equation*}
    \V\Tr \QAx \V = \V\Tr \M \Vx - \frac{1}{2} \V\Tr \Ex \V.
  \end{equation*}
  Adding this block to its transpose, we find
  \begin{equation}\label{eq:comp_mat}
    \V\Tr \QAx \V + \V\Tr \QAx \Tr \V = 
    \V\Tr \M \Vx + \Vx\Tr \M \V - \V\Tr \Ex \V,
  \end{equation}
  where we have used the symmetry of $\Ex$.  The right-hand side of
  \eqref{eq:comp_mat} is the matrix form of the (rearranged) compatibility
  equations \eqref{MSBP:compatx}.  Thus, $\V\Tr \QAx \V + \V\Tr \QAx\Tr \V =
  \mat{0}$, proving that the first block is antisymmetric.  For the remaining
  three blocks, antisymmetry requires
  \begin{equation*}
    \left( \V\Tr \QAx \W \right)\Tr = -\W\Tr \QAx \V,
    \qquad\text{and}\qquad
    \W\Tr \QAx \W = -\W\Tr \QAx\Tr \W.
  \end{equation*}
  Substituting $\QAx$ and $\Ex$ and simplifying, we obtain the following
  equations:
  \begin{equation}\label{eq:skew_conditions}
    \Wx\Tr \M \V + \W\Tr \M \Vx = \Fx
    \qquad\text{and}\qquad
    \W\Tr \M \Wx + \Wx\Tr \M \W = \Gx.
  \end{equation}
  The two matrix equations above constitute $n(n- \nmin{p})$ scalar equations.
  We are free to choose $\Wx$, $\Gx$, and $\Fx$, so the matrix equations are
  underdetermined ($\Wx$ alone has $n(n-\nmin{p})$ entries).  To prove existence
  of an SBP operator we need only find one solution; for example, take $\Wx =
  \mat{0}$, $\Gx=\mat{0}$, and $\Fx = \W\Tr \M \Vx$.  Thus, the equations can be
  satisfied to ensure the antisymmetry of $\QAx$.
\qquad\end{proof}

\begin{remark}
  In general, there are infinitely many operators associated with a given
  cubature rule that satisfy Definition~\ref{DEFSBPgen2}.  For example, the
  proof of Theorem~\ref{ExistDiagH} only considered one way to solve
  \eqref{eq:skew_conditions}.  Another way to satisfy these conditions is to set
  $\Fx = \mat{0}$ and then solve
  \begin{equation*}
    \left(\V\Tr \M\right) \Wx = - \Vx\Tr \M \W
  \end{equation*}
  for $\Wx$ by finding the minimum Frobenius-norm solution.  $\Gx$ can then be
  computed directly from the second equation, \eqref{eq:skew_conditions}.
\end{remark}

\ignore{
\begin{proof}
The necessary condition results from Theorem (\ref{L2diagH}). If the cubature weights are positive then $\M$ can be constructed by injecting the weights along the diagonal. To prove that these conditions are sufficient, it is necessary to show that there always exist $\QAx$ and $\QAy$ that satisfy the degree conditions. Consider the matrix
\begin{equation}
\mat{\tilde{V}} = \left[\mat{V}\left(:,1:\eta_{p}\right),\bm{g}_{1},\dots,\bm{g}_{n-\eta_{p}}\right],
\end{equation}
where $\eta_{p} = \frac{(p+1)(p+2)}{2}$. By assumption, the first $\eta_{p}$ columns of $\tilde{\mat{V}}$ are linearly independent, while the vectors $\bm{g}_{i}$ are chosen such that the columns of $\mat{\tilde{V}}$ are a basis for $\mathbb{R}^{n}$. Constructed in this way, $\mat{\tilde{V}}$ has an inverse. Now apply the SBP operator $\Dx$ to $\mat{\tilde{V}}$, giving
\begin{equation}\label{proofdiagH1}
\begin{array}{rcl}
\Dx\mat{\tilde{V}} &=& \left[\bm{0},\dots,\bm{0},\bm{x}^{0}\circ\bm{y}^{0},\dots,\bm{x}^{0}\circ\bm{y}^{p-1},\dots,p\bm{x}^{p-1}\circ\bm{y}^{0},\bm{t}_{1},\dots,\bm{t}_{n-\eta_{p}}\right]\\\\
&=&\mat{\tilde{V}}_{x},
\end{array}
\end{equation}
where $\bm{t}_{i}=\Dx\bm{g}_{i}$ and are vectors of unspecified entries. Using the definition of an SBP operator, $\Dx=\M^{-1}\left(\QAx+\frac{1}{2}\Ex\right)$ and solving for $\QAx$ in (\ref{proofdiagH1}) gives
\begin{equation}\label{proofdiagH2}
\QAx=\left(\M\mat{\tilde{V}}_{x}-\frac{1}{2}\Ex\mat{\tilde{V}}\right)\mat{\tilde{V}}^{-1}.
\end{equation}
It is necessary to show that $\left(\M\mat{\tilde{V}}_{x}-\frac{1}{2}\Ex\mat{\tilde{V}}\right)\mat{\tilde{V}}^{-1}$ is antisymmetric; however, this is equivalent to showing that
\begin{equation}\label{proofdiagH3}
\mat{\tilde{V}}\Tr\M\mat{\tilde{V}}_{x}-\frac{1}{2}\mat{\tilde{V}}\Tr\Ex\mat{\tilde{V}}
\end{equation}
is antisymmetric. Before proceeding, we discuss the construction of $\Ex$ such that Definition (\ref{DEFSBPgen2}) is satisfied. Consider 
\begin{equation}\label{defEtilde}
\mat{\tilde{V}}\Tr\Ex\mat{\tilde{V}}=\tilde{\mat{E}}_{x},
\end{equation}
where the $\eta_{p}\times\eta_{p}$ upper left-hand submatrix of $\mat{\tilde{E}}$ is constructed such that the entries are equal to 
\begin{equation}
\oint_{\Gamma}x^{\ax+\bx}y^{\ay+\by}n_{x}\mr{d}\Gamma.
\end{equation}
Thus,
\begin{equation}\label{defEx}
\left(\tilde{\mat{E}}_{x}\right)_{ij}=\left\{
\begin{array}{cc}
\oint_{\Gamma}\left(\bm{w}\bm{w}\Tr\right)_{ij}\mr{d}\Gamma&i,j\in[1,\eta_{p}]\\\\
\tilde{e}_{ij}&\text{otherwise}
\end{array}
\right.,
\end{equation}
where $\bm{w}=\left[x^{0}y^{0}\dots x^{0}y^{p}\dots x^{p}y^{0}\right]\Tr$.

Now, it is proven that definition (\ref{defEx}) results in (\ref{proofdiagH3}) that is antisymmetric for $(1:\eta_{p},1:\eta_{p})$ and then, that the remaining degrees of freedom of $\Ex$ and or the degrees of freedom in the $\bm{t}_{i}$ can be chosen so that the rest of $\QAx$ is antisymmetric. The compatibility equation (\ref{MSBP:compatx}) can be rearranged into
\begin{equation}\label{proofdiagH4}
\begin{array}{c}
\ax\left(\bm{x}^{\bx}\circ\bm{y}^{\by}\right)\Tr\M\bm{x}^{\ax-1}\circ\bm{y}^{\ay}-\frac{1}{2}\left(\bm{x}^{\bx}\circ\bm{y}^{\by}\right)\Tr\Ex\bm{x}^{\ax}\circ\bm{y}^{\ay}=\\\\
-\left[\bx\left(\bm{x}^{\ax}\circ\bm{y}^{\ay}\right)\Tr\M\bm{x}^{\bx-1}\circ\bm{y}^{\by}
-\frac{1}{2}\left(\bm{x}^{\ax}\circ\bm{y}^{\ay}\right)\Tr\Ex\bm{x}^{\bx}\circ\bm{y}^{\by}\right],\\\\
\forall\;\ax+\ay,\;\bx+\by\leq p,
\end{array}
\end{equation}
where the symmetry of $\Ex$ has been used to rearrange the last term on the right-hand side. Defining,
\begin{equation}\label{proofdiagH5}
\bm{r}(\ax,\ay) = \ax\M\bm{x}^{\ax-1}\circ\bm{y}^{\ay}-\frac{1}{2}\Ex\bm{x}^{\ax}\circ\bm{y}^{\ay},
\end{equation}
then (\ref{proofdiagH4}) can be recast as
\begin{equation}\label{proofdiagH6}
\left<\bm{x}^{\bx}\circ\bm{y}^{\by},\bm{r}(\ax,\ay)\right>=-\left<\bm{x}^{\ax}\circ\bm{y}^{\ay},\bm{r}(\bx,\by)\right>,
\end{equation}
where $\left<,\right>$ is the usual dot product. Using relation (\ref{proofdiagH6}), it is clear that $\left(\mat{\tilde{V}}\Tr\M\mat{\tilde{V}}_{x}-\frac{1}{2}\mat{\tilde{V}}\Tr\Ex\mat{\tilde{V}}\right)_{1:\eta_{p},1:\eta_{p}}$ is antisymmetric.

Now we show that with the remaining degrees of freedom in $\Ex$ and the $\bm{t}_{i}$ can be chosen so that the resultant $\QAx$ is antisymmetric. First, consider the case where all of the $\bm{t}_{i}$ are specified; substituting (\ref{defEtilde}) into (\ref{proofdiagH3}) the antisymmetry condition becomes
\begin{equation}\label{proofdiagH31}
\mat{\tilde{V}}\Tr\M\mat{\tilde{V}}_{x}-\frac{1}{2}\tilde{\mat{E}}_{x}=-\left(\mat{\tilde{V}}\Tr\M\mat{\tilde{V}}_{x}-\frac{1}{2}\tilde{\mat{E}}_{x}\right)\Tr.
\end{equation}
Solving for $\tilde{\mat{E}}_{x}$ in (\ref{proofdiagH31}) results in
\begin{equation}\label{proofdiagH32}
\tilde{\mat{E}}_{x}= \mat{\tilde{V}}\Tr\M\mat{\tilde{V}}_{x}+\mat{\tilde{V}}_{x}\Tr\M\mat{\tilde{V}}.
\end{equation}
Equation (\ref{proofdiagH31}) demonstrates that $\mat{\tilde{E}}_{x}$ can be constructed that satisfies Definition \ref{DEFSBPgen2}, when all the $\bm{t}_{i}$ vectors in $\tilde{\mat{V}}_{x}$ are specified.

If instead $\tilde{\mat{E}}_{x}$ is fully determined, we show that the $\bm{t}_{i}$ can be sequentially constructed such that an antisymmetric $\QAx$ results. First consider that
\begin{equation}\label{proofdiagH33}
\left(\tilde{\mat{V}}\Tr\M\tilde{\mat{V}}_{x}-\frac{1}{2}\tilde{\mat{E}}_{x}\right)_{ij}=
\tilde{\mat{V}}\Tr(:,i)\M\tilde{\mat{V}}_{x}(:,j)-\frac{1}{2}\left(\tilde{\mat{E}_{x}}\right)_{ij}.
\end{equation}
Using (\ref{proofdiagH33}), the antisymmetric condition can be expressed as
\begin{equation}\label{proofdiagH34}
\tilde{\mat{V}}\Tr(:,i)\M\tilde{\mat{V}}_{x}(:,j)=-\tilde{\mat{V}}\Tr(:,j)\M\tilde{\mat{V}}_{x}(:,i)+\left(\tilde{\mat{E}}_{x}\right)_{ij}.
\end{equation}
The $\bm{t}_{i}$ are sequentially determined such that they satisfy (\ref{proofdiagH34}). To demonstrate the procedure, we start with $\bm{t}_{1}$. Using (\ref{proofdiagH34}), $\bm{t}_{1}$ must satisfy
\begin{equation}\label{proofdiagH35}
\tilde{\mat{V}}\Tr(:,i)\M\bm{t}_{1}=-\tilde{\mat{V}}\Tr(:\eta_{p}+1)\M\tilde{\mat{V}}_{x}(:,i)+\left(\tilde{\mat{E}}\right)_{i(\eta_{p}+1)},\;i\in[1,\eta_{p}].
\end{equation}
Since the first $\eta_{p}$ columns of $\tilde{\mat{V}}\Tr\M\tilde{\mat{V}}_{x}-\frac{1}{2}\tilde{\mat{E}}_{x}$ are fully specified, the first $\eta_{p}$ equations for $\bm{t}_{i}$ have a known right-hand side, the remaining entries of $\left(\tilde{\mat{V}}\Tr\M\tilde{\mat{V}}_{x}-\frac{1}{2}\tilde{\mat{E}}_{x}\right)_{ij}$ for $i\ge\eta_{p}$ and $j=\eta_{p}+1$ can be arbitrarily set. Therefore, we have the following system of equations for $\bm{t}_{1}$  
\begin{equation}\label{proofdiagH7}
\tilde{\mat{V}}^{t}\M\bm{T}_{1}=\bm{b},
\end{equation}
where the vector $\bm{b}$ is composed of known values and the arbitrarily set values. Given that $\tilde{\mat{V}}$ and $\M$ are invertible, (\ref{proofdiagH7}) has a solution and we proceed in this manner to specify the remaining $\bm{t}_{i}$.
\qquad\end{proof}
}

The following theorem characterizes the matrices $\Qx$ and $\QAx$ in terms of
the bilinear forms that they approximate.  The theorem is useful when
discretizing the weak form of a PDE, rather than the strong form, using SBP
finite-difference operators.
\begin{theorem}\label{QxDiag}
Let $\Dx=\M^{-1}\Qx=\M^{-1}\left(\QAx+\frac{1}{2}\Ex\right)$ be a diagonal-norm SBP operator of degree $p$ on the domain $\Omega$.  Then
\begin{align}
\pk\Tr \Qx \pM &= \int_{\Omega} \fpk \dxfpm \mr{d} \Omega, \label{bilinearQx}
\qquad\forall\; k+m \leq \min\left(\nmin{2p}, \nmin{\tau}\right)
 \\
\pk\Tr \QAx \pM &= \int_{\Omega} \fpk \dxfpm \mr{d}\Omega
- \frac{1}{2}\oint_{\Gamma} \fpk \fpm n_{x} \mr{d}\Gamma,
\qquad\forall\; k+m \leq \min\left(\nmin{2p}, \nmin{\tau}\right).
 \label{bilinearQAx}
\end{align}
\end{theorem}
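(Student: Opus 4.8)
The plan is to establish \eqref{bilinearQx} first and then read off \eqref{bilinearQAx} almost immediately. Since property~\ref{sbp:Ex} gives $\Qx = \QAx + \tfrac{1}{2}\Ex$ with $\Ex$ symmetric, we have $\pk\Tr\QAx\pM = \pk\Tr\Qx\pM - \tfrac{1}{2}\pk\Tr\Ex\pM$, and property~\ref{sbp:Ex} already identifies $\pk\Tr\Ex\pM$ with $\oint_{\Gamma}\fpk\fpm n_{x}\,\mr{d}\Gamma$ whenever $k,m\le\nmin{\tau}$. Thus \eqref{bilinearQAx} follows from \eqref{bilinearQx} by subtracting half the boundary term, and essentially all of the work is concentrated in \eqref{bilinearQx}.

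To prove \eqref{bilinearQx}, I would use property~\ref{sbp:H} to write $\Qx=\M\Dx$, so that $\pk\Tr\Qx\pM = \pk\Tr\M\left(\Dx\pM\right)$. The guiding idea is to place the derivative on whichever of the two monomials has degree at most $p$, since only then does the accuracy property~\ref{sbp:accuracy} apply. If $\fpm$ has degree at most $p$, then $\Dx\pM=\dxpm$ and $\pk\Tr\Qx\pM = \pk\Tr\M\dxpm = \sum_{j}\M_{j,j}\,\fpk\dxfpm\big|_{(x_{j},y_{j})}$, which is precisely the cubature rule of Theorem~\ref{L2diagH} applied to the polynomial $\fpk\,\partial\fpm/\partial x$. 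Because that cubature is exact to degree $2p-1$, the sum equals $\int_{\Omega}\fpk\dxfpm\,\mr{d}\Omega$ provided the integrand does not exceed degree $2p-1$, which yields \eqref{bilinearQx}.

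For the complementary situation, in which $\fpm$ has degree larger than $p$ but $\fpk$ has degree at most $p$, the derivative cannot be moved onto $\pM$ directly. Here I would use the identity $\Qx\Tr = -\QAx + \tfrac{1}{2}\Ex = \Ex - \Qx$, which follows from the antisymmetry of $\QAx$ and the symmetry of $\Ex$, to obtain $\pk\Tr\Qx\pM = \pk\Tr\Ex\pM - \pM\Tr\Qx\pk$. The first term is the boundary integral by property~\ref{sbp:Ex}, while the second is handled by the argument of the previous paragraph with the roles of $k$ and $m$ exchanged, so that the derivative now lands on the low-degree factor $\fpk$ and $\pM\Tr\Qx\pk = \int_{\Omega}\fpm\dxfpk\,\mr{d}\Omega$. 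Integration by parts, $\int_{\Omega}\fpk\dxfpm\,\mr{d}\Omega + \int_{\Omega}\fpm\dxfpk\,\mr{d}\Omega = \oint_{\Gamma}\fpk\fpm n_{x}\,\mr{d}\Gamma$, then combines the two contributions into $\int_{\Omega}\fpk\dxfpm\,\mr{d}\Omega$, again giving \eqref{bilinearQx}.

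The step I expect to be the main obstacle is the degree bookkeeping that legitimizes both the phrase ``the integrand does not exceed degree $2p-1$'' and the invocations of property~\ref{sbp:Ex}. One must translate the single index constraint $k+m\le\min(\nmin{2p},\nmin{\tau})$ into degree statements for $\fpk$, $\fpm$, and the $x$-derivatives (noting that $\partial/\partial x$ lowers the $x$-power by one, so terms with no $x$-dependence vanish trivially), verify that in every nonzero term at least one of $\fpk$, $\fpm$ has degree at most $p$ so that one of the two routes applies, and check simultaneously that $\fpk\,\partial\fpm/\partial x$ stays within the cubature's degree of exactness $2p-1$ while $\fpk\fpm$ stays within the degree of exactness $\tau$ of $\Ex$. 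Keeping these two accuracy budgets aligned with the stated index bound is the delicate part; the remaining manipulations are direct consequences of properties~\ref{sbp:accuracy}--\ref{sbp:Ex}, Theorem~\ref{L2diagH}, and integration by parts.
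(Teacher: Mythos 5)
Your proposal matches the paper's proof: the derivation of \eqref{bilinearQAx} from \eqref{bilinearQx} via $\Qx = \QAx + \frac{1}{2}\Ex$ and the accuracy of $\Ex$ is exactly what the paper does, and for \eqref{bilinearQx} the paper simply cites the one-dimensional argument of \cite{Hicken2013}, which is precisely the case split you spell out (accuracy of $\Dx$ plus the degree-$2p-1$ cubature of Theorem~\ref{L2diagH} when the differentiated factor has degree at most $p$, and the identity $\Qx+\Qx\Tr=\Ex$ with integration by parts otherwise). Your write-up is in fact more explicit than the paper's on the first identity; the degree bookkeeping you flag as delicate is likewise left implicit there.
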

\begin{proof}
The proof of \eqref{bilinearQx} is analogous to the proof in \cite{Hicken2013}
for one-dimensional classical finite-difference-SBP operators.  With \eqref{bilinearQx} established, we can substitute $\Qx=\QAx+\frac{1}{2}\Ex$ and rearrange to obtain
\begin{align*} 
  \pk\Tr\QAx\pM&=\displaystyle\int_{\Omega}\fpk\dxfpm \mr{d}\Omega
  -\frac{1}{2}\pk\Tr\Ex\pM, \\
  &= \displaystyle\int_{\Omega}\fpk\dxfpm\mr{d}\Omega
  -\frac{1}{2}\oint_{\Gamma}\fpk\fpm n_{x} \mr{d}\Gamma,\qquad
  \forall\;k+m\leq \min\left(\nmin{2p},\nmin{\tauEx}\right),
\end{align*}
where we have used the accuracy property of $\Ex$ to get the desired result.
\qquad\end{proof}

\subsection{Stability Analysis}

We conclude our analysis of diagonal-norm multi-dimensional SBP operators by
investigating the stability of an SBP semi-discretization of the
constant-coefficient advection equation
\begin{equation}
  \begin{aligned}
    &\frac{\partial \fnc{U}}{\partial t} + \beta_{x} \frac{\partial \fnc{U}}{\partial x} +
    \beta_{y} \frac{\partial \fnc{U}}{\partial y} = 0, \qquad \forall\; (x,y) \in \Omega, \\
    &\fnc{U}(x,y,t) = \fnc{U}_{\text{bc}}(x,y,t),\qquad \forall\; (x,y) \in \Gamma_{-}, 
\end{aligned}\label{eq:linear_advect}
\end{equation}
where $\Gamma^{-} = \{ (x,y) \in \Gamma \;|\; \beta_{x} n_{x} + \beta_{y} n_{y}
< 0 \}$ is the inflow boundary and $\Gamma^{+} = \Gamma \setminus \Gamma^{-}$ is
the outflow boundary.

Let $\Dx$ and $\Dy$ be SBP operators and let $\Ex$ and $\Ey$ be their
corresponding boundary operators.  In order to impose the boundary conditions in
a stable manner, we introduce the decomposition
\begin{equation}
  \beta_{x} \Ex + \beta_{y} \Ey = \mat{E}_{+} + \mat{E}_{-},\label{eq:E_decomp}
\end{equation}
where $\mat{E}_{+}$ is symmetric positive semi-definite and $\mat{E}_{-}$ is
symmetric negative semi-definite, and
\begin{equation*}
  \begin{split}
   \pk\Tr\mat{E}_{\pm}\pM
    =\displaystyle\oint_{\Gamma_{\pm}}\fpk\fpm\left( \beta_{x} n_{x} +
    \beta_{y} n_{y} \right) \mr{d}\Gamma,\qquad
     \forall\; k,m \in \{ 1,2,\dots,\nmin{\tauEx} \}.
  \end{split}
\end{equation*}
The existence of the decomposition \eqref{eq:E_decomp} is guaranteed provided
$\mat{F}_{x} = \mat{F}_{y} = \mat{0}$; see Appendix~\ref{sec:E_decomp} for the proof.

Using $\Dx$ and $\Dy$ and the matrix $\mat{E}_{-}$, a consistent
semi-discretization of \eqref{eq:linear_advect} is given by
\begin{equation}
\frac{d \bm{u}}{d t} + \beta_{x} \Dx \bm{u} + \beta_{y} \Dy \bm{u}
= \sigma \M^{-1} \mat{E}_{-} \left(\bm{u} - \bm{u}_{\text{bc}} \right). \label{eq:SBPSAT}
\end{equation}
The three terms on the left-hand side of \eqref{eq:SBPSAT} correspond to the
three terms in the strong-form of the PDE.  The terms on the right-hand side of
\eqref{eq:SBPSAT} are penalties that enforce the boundary conditions weakly
using simultaneous-approximation terms (SATs)~\cite{funaro:1988, Carpenter1994}.
The boundary data is supplied by the vector $\bm{u}_{\text{bc}}$, which must
produce a sufficiently accurate reconstruction of $\fnc{U}_{\text{bc}}$ along
the boundary.  Evaluating $\fnc{U}_{\text{bc}}$ is adequate when the nonzeros of
$\Ex$ correspond to nodes that lie on $\Gamma$, such as the simplex operators
presented below.  More generally, a preprocessing step can be performed to find
a suitable $\bm{u}_{\text{bc}}$.

We now show that \eqref{eq:SBPSAT} is time stable.

\begin{theorem}\label{energy_est}
  Let $\bm{u}$ be the solution to \eqref{eq:SBPSAT} with homogeneous boundary
  conditions and bounded initial condition.  Then the norm $\|\bm{u}\|_{\M} =
  \sqrt{\bm{u}\Tr \M \bm{u}}$ is non-increasing if $\sigma \geq \frac{1}{2}$.
\end{theorem}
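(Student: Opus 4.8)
The plan is to carry out the standard discrete energy estimate. Differentiating the squared norm and using the symmetry of $\M$ gives $\frac{\mr{d}}{\mr{d}t}\|\bm{u}\|_{\M}^{2} = 2\bm{u}\Tr \M \frac{\mr{d}\bm{u}}{\mr{d}t}$. I would then substitute the semi-discretization \eqref{eq:SBPSAT} with homogeneous data, $\bm{u}_{\text{bc}}=\bm{0}$, and use property~\ref{sbp:H} to replace $\M\Dx$ by $\Qx = \QAx + \frac{1}{2}\Ex$ (and likewise $\M\Dy$ by $\Qy = \QAy + \frac{1}{2}\Ey$). This yields
\begin{equation*}
\frac{\mr{d}}{\mr{d}t}\|\bm{u}\|_{\M}^{2}
= -2\bm{u}\Tr\left[\beta_{x}\left(\QAx + \tfrac{1}{2}\Ex\right)
+ \beta_{y}\left(\QAy + \tfrac{1}{2}\Ey\right)\right]\bm{u}
+ 2\sigma\,\bm{u}\Tr\mat{E}_{-}\bm{u}.
\end{equation*}

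The crucial step is to discard the skew-symmetric transport terms. Since property~\ref{sbp:Ex} makes $\QAx$ and $\QAy$ antisymmetric, and $\bm{u}\Tr\mat{A}\bm{u}=0$ for any antisymmetric $\mat{A}$, those contributions vanish and only the symmetric boundary terms survive:
\begin{equation*}
\frac{\mr{d}}{\mr{d}t}\|\bm{u}\|_{\M}^{2}
= -\bm{u}\Tr\left(\beta_{x}\Ex + \beta_{y}\Ey\right)\bm{u}
+ 2\sigma\,\bm{u}\Tr\mat{E}_{-}\bm{u}.
\end{equation*}
Applying the boundary decomposition \eqref{eq:E_decomp}, $\beta_{x}\Ex + \beta_{y}\Ey = \mat{E}_{+}+\mat{E}_{-}$, and collecting terms reduces this to
\begin{equation*}
\frac{\mr{d}}{\mr{d}t}\|\bm{u}\|_{\M}^{2}
= -\bm{u}\Tr\mat{E}_{+}\bm{u}
+ \left(2\sigma - 1\right)\bm{u}\Tr\mat{E}_{-}\bm{u}.
\end{equation*}

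The conclusion then follows from the definiteness of the boundary pieces: because $\mat{E}_{+}$ is positive semi-definite the first term is non-positive, and because $\mat{E}_{-}$ is negative semi-definite we have $\bm{u}\Tr\mat{E}_{-}\bm{u}\le 0$, so the second term is non-positive exactly when $2\sigma-1\ge 0$, i.e.\ $\sigma\ge\frac{1}{2}$. Hence $\frac{\mr{d}}{\mr{d}t}\|\bm{u}\|_{\M}^{2}\le 0$ and the norm is non-increasing. I do not anticipate any real difficulty here, as this is a routine energy argument; the only points needing care are the bookkeeping of the factor $\frac{1}{2}$ multiplying $\Ex$ and $\Ey$ and the correct pairing of the SAT coefficient with $\mat{E}_{-}$, together with the tacit assumption that the decomposition \eqref{eq:E_decomp} exists (guaranteed when $\mat{F}_{x}=\mat{F}_{y}=\mat{0}$). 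The structural facts doing all the work are the antisymmetry of $\QAx$ and $\QAy$, which removes the interior operator from the energy balance, and the semi-definiteness of $\mat{E}_{\pm}$.
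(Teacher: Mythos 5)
Your proof is correct and follows essentially the same route as the paper: multiply by $\bm{u}\Tr\M$, use $\M\Dx=\Qx$ and $\M\Dy=\Qy$, eliminate the antisymmetric parts $\QAx,\QAy$ from the quadratic form, apply the decomposition \eqref{eq:E_decomp}, and conclude from the semi-definiteness of $\mat{E}_{\pm}$. The final energy identity you obtain is identical to the paper's, so nothing further is needed.
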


\begin{proof}
  Multiplying the semi-discretization (with $\bm{u}_{\text{bc}} = \bm{0}$) from
  the left by $\bm{u}\Tr \M$ we find
  \begin{gather*}
    \bm{u}\Tr\M \frac{d \bm{u}}{dt} 
    + \beta_{x} \bm{u}\Tr \Qx \bm{u} + \beta_{y} \bm{u}\Tr \Qy \bm{u}
    = \sigma \bm{u}\Tr \mat{E}_{-} \bm{u} \\
    \Rightarrow\qquad
    \frac{d \| \bm{u} \|_{\M}^{2}}{dt}
    = - \bm{u}\Tr \mat{E}_{+} \bm{u}
    + (2 \sigma - 1) \bm{u}\Tr \mat{E}_{-} \bm{u}.
  \end{gather*}
  To obtain the last line, we used the definitions of $\Qx$ and $\Qy$, as well
  as the decomposition~\eqref{eq:E_decomp}.  The first quadratic form on the
  right is non-positive by definition of $\mat{E}_{+}$.  The result follows if
  $\sigma \geq \frac{1}{2}$, since $\mat{E}_{-}$ is negative semi-definite.
\qquad\end{proof}

\section{Constructing the operators}\label{sec:construct}

This section describes how we construct \linebreak diagonal-norm SBP operators
for triangles and tetrahedra.  The algorithms described below have been
implemented in the Julia package
SummationByParts\footnote{\url{https://github.com/OptimalDesignLab/SummationByParts.jl}}.

\subsection{The node coordinates and the norm matrix}\label{sec:norm_and_nodes}

Theorem~\ref{L2diagH} tells us that the diagonal entries in $\M$ are positive
weights from a cubature that is exact for polynomials of total degree $2p-1$.
Thus, our first task is to find cubature rules with positive weights for the
triangle and tetrahedron.  Additionally, we seek rules that use as few nodes as
possible for a given order of accuracy while respecting the symmetries of the
triangle and tetrahedron; the former condition is for efficiency while the
latter condition is to reduce directional biases.

For the operators considered in this work, we require that ${ p+d-1 \choose
  d-1}$ cubature nodes lie on each boundary facet, where $d$ is the spatial
dimension.  This requirement on the nodes is motivated by the particular form of
the $\Ex$, $\Ey$, and $\Ez$ operators that we consider below; however,
Definition \ref{DEFSBPgen2} does not require a prescribed number of boundary
nodes, and SBP operators for the 2- and 3-simplex may exist that do not have any
boundary nodes at all.

\afterpage{%
\begin{table}[t!]
  \begin{center}
    \caption[]{Active orbits and their node counts for triangular-element
      operators.  The notation $\textsf{Perm}$ indicates that every permutation
      of the barycentric coordinates is to be considered.  Free-node counts are
      decomposed into the product of the number of nodes in the orbit times the
      number of orbits of that type.
      \label{tab:orbits_tri}}
    \begin{threeparttable}
      \begin{tabular}{lllcccc}
        \rule{0ex}{3ex}& & & \multicolumn{4}{c}{\textbf{operator degree, $p$}} \\\cline{4-7}
        \rule{0ex}{3ex}& \textbf{orbit name} & \textbf{barycentric form} 
        & 1 & 2 & 3 & 4\\\hline
        \rule{0ex}{3ex}\textbf{fixed nodes} & vertices & $\textsf{Perm}(1,0,0)$
        & 3 & 3 & 3 & 3 \\[1ex]
        & mid-edge & $\textsf{Perm}\left(\frac{1}{2},\frac{1}{2},0\right)$
        & --- & 3 & --- & 3 \\[1ex]
        & centroid & $\left(\frac{1}{3},\frac{1}{3},\frac{1}{3}\right)$
        & --- & 1 & --- & --- \\[1.5ex]\hline
        \rule{0ex}{3ex}\textbf{free nodes} & edge & 
        $\textsf{Perm}\left(\alpha, 1-\alpha, 0\right)$
        & --- & --- & $6\times 1$ & $6\times 1$ \\[1ex]
        & $S_{21}$ & $\textsf{Perm}\left(\alpha, \alpha, 1-2\alpha\right)$
        & --- & --- & $3\times 1$ & $3\times 2$ \\[1.5ex]\hline
        \rule{0ex}{3ex}& & \textbf{\# free parameters}
        & --- & --- & 2 & 3 \\
        & & \textbf{\# nodes total}
        & 3 & 7 & 12 & 18
      \end{tabular}
    \end{threeparttable}
  \end{center}
\end{table}
\begin{figure}[h!]
  \renewcommand{\thesubfigure}{}
  \subfigure[$p=1$ \label{fig:tri_nodes_p1}]{%
    \includegraphics[width=0.24\textwidth]{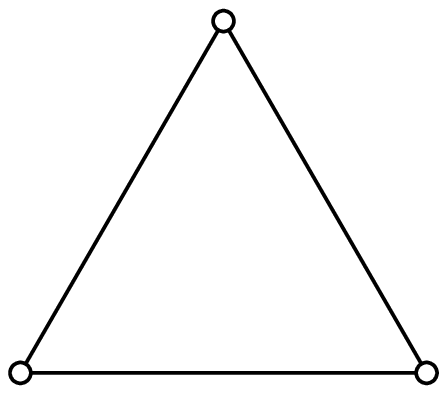}}
  \subfigure[$p=2$ \label{fig:tri_nodes_p2}]{%
        \includegraphics[width=0.24\textwidth]{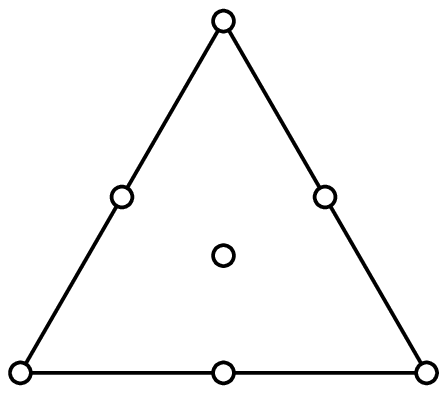}}
  \subfigure[$p=3$ \label{fig:tri_nodes_p3}]{%
        \includegraphics[width=0.24\textwidth]{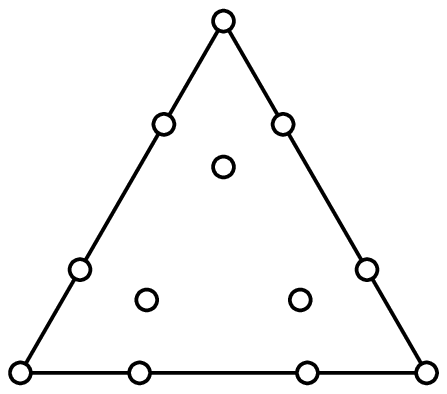}}
  \subfigure[$p=4$ \label{fig:tri_nodes_p4}]{%
        \includegraphics[width=0.24\textwidth]{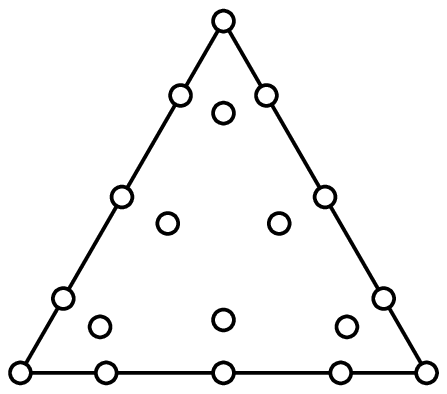}}
  \caption{Node distributions for cubature rules adopted for the SBP operators
    on triangles.\label{fig:tri_nodes}}
\end{figure}
}

Cubature rules that meet our requirements for triangular elements are presented
in references~\cite{liu:1998, Cohen2001, Mulder2001, Giraldo2006} in the context
of the spectral-element and spectral-difference methods.
Table~\ref{tab:orbits_tri} summarizes the rules that are adopted for
triangular-element SBP operators of degree $p=1,2,3$, and 4.  For reference, the
node locations for the triangular cubature rules are shown in
Figure~\ref{fig:tri_nodes}.

To find cubature rules for the tetrahedron, we follow the ideas presented in \cite{Giraldo2006, zhang:2009, witherden:2014}.  Our procedure is briefly outlined below for completeness, but we make no claims regarding the novelty of the cubature rules or our method of finding them.

We assume that each node belongs to a (possibly degenerate) symmetry orbit~\cite{zhang:2009}.  As indicated above, we assume that the cubature node set includes $p+1$ nodes along each edge and $(p+1)(p+2)/2$ nodes on each triangular face.  For the interior nodes, we activate the minimum number of symmetry orbits necessary to satisfy the accuracy conditions; these orbits have been identified through trial and error.

Each symmetry orbit has a cubature weight associated with it, and orbits that are non-degenerate are parameterized using one or more barycentric parameters.  Together, the orbit parameters and the weights are the degrees of freedom that must be determined.  They are found by solving the nonlinear accuracy conditions using the Levenberg-Marquardt algorithm.  The accuracy conditions are implemented using the integrals of orthogonal polynomials on the tetrahedron~\cite{Koornwinder1975, Dubiner1991}.

Table~\ref{tab:orbits_tet} summarizes the node sets used for the tetrahedron
cubature rules, and Figure~\ref{fig:tet_nodes} illustrates the node coordinates.

\afterpage{%
\begin{table}[t!]
  \begin{center}
    \caption[]{Active orbits and their node counts for tetrahedral-element operators.  See the caption of Table~\ref{tab:orbits_tri} for an explanation of the notation.\label{tab:orbits_tet}}
    \begin{threeparttable}
      \begin{tabular}{lllcccc}
        \rule{0ex}{3ex}& & & \multicolumn{4}{c}{\textbf{operator degree, $p$}} \\\cline{4-7}
        \rule{0ex}{3ex}& \textbf{orbit name} & \textbf{barycentric form} 
        & 1 & 2 & 3 & 4\\\hline
        \rule{0ex}{3ex}\textbf{fixed nodes} & vertices & $\textsf{Perm}(1,0,0,0)$
        & 4 & 4 & 4 & 4 \\[1ex]
        & mid-edge & $\textsf{Perm}\left(\frac{1}{2}, \frac{1}{2}, 0, 0 \right)$
        & --- & 6 & --- & 6 \\[1ex]
        & centroid & $\left(\frac{1}{4},\frac{1}{4},\frac{1}{4}, \frac{1}{4} \right)$
        & --- & 1 & --- & 1 \\[1ex]
        & face centroid & $\textsf{Perm}\left(\frac{1}{3}, \frac{1}{3}, \frac{1}{3}, 0\right)$
        & --- & --- & 4 & --- \\[1.5ex]\hline
        \rule{0ex}{3ex}\textbf{free nodes} & edge & 
        $\textsf{Perm}\left(\alpha, 1-\alpha, 0, 0\right)$
        & --- & --- & $12\times 1$ & $12\times 1$ \\[1ex]
        & face $S_{21}$ & $\textsf{Perm}\left(\alpha, \alpha, 1-2\alpha, 0\right)$
        & --- & --- & --- & $12\times 1$ \\[1ex]
        & $S_{31}$ & $\textsf{Perm}\left(\alpha, \alpha, \alpha, 1 - 3\alpha \right)$
        & --- & --- & $4\times 1$ & $4\times 1$ \\[1ex]
        & $S_{22}$ & $\textsf{Perm}\left(\alpha, \alpha, \frac{1}{2}-\alpha, \frac{1}{2} - \alpha \right)$
        & --- & --- & --- & $6\times 1$ \\[1.5ex]\hline
        \rule{0ex}{3ex}& & \textbf{\# free parameters}
        & --- & --- & 2 & 4 \\
        & & \textbf{\# nodes total}
        & 4 & 11 & 24 & 45
      \end{tabular}
    \end{threeparttable}
  \end{center}
\end{table}
\begin{figure}[h!]
  \renewcommand{\thesubfigure}{}
  \subfigure[$p=1$ \label{fig:tet_nodes_p1}]{%
    \includegraphics[width=0.24\textwidth]{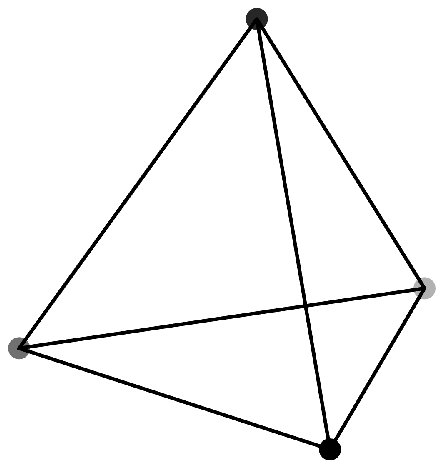}}
  \subfigure[$p=2$ \label{fig:tet_nodes_p2}]{%
        \includegraphics[width=0.24\textwidth]{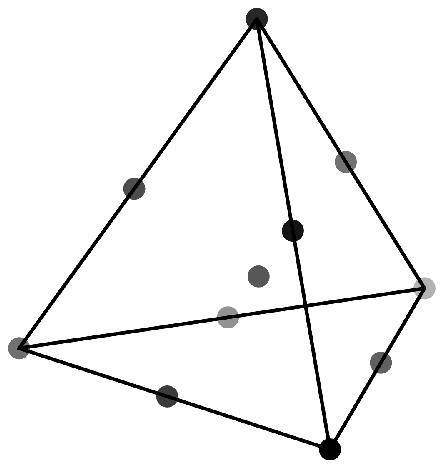}}
  \subfigure[$p=3$ \label{fig:tet_nodes_p3}]{%
        \includegraphics[width=0.24\textwidth]{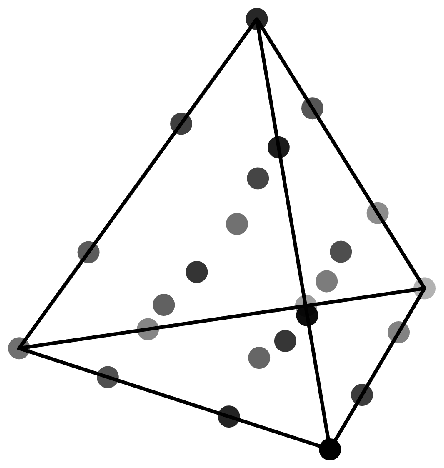}}
  \subfigure[$p=4$ \label{fig:tet_nodes_p4}]{%
        \includegraphics[width=0.24\textwidth]{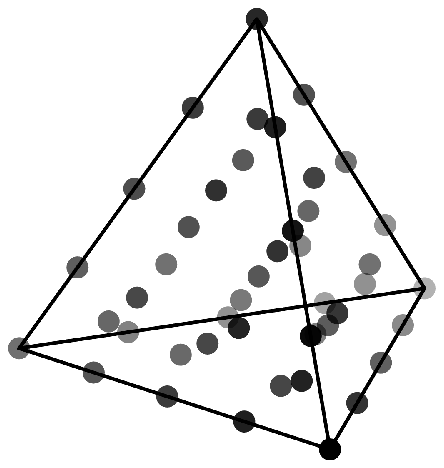}}
  \caption{Node distributions for cubature rules adopted for the SBP operators
    on tetrahedra.\label{fig:tet_nodes}}
\end{figure}
}

\subsection{The boundary operators}

Definition~\ref{DEFSBPgen2} implies that the boundary operator $\Ex$ satisfies
\begin{equation*}
  \bm{v}\Tr \Ex \bm{u} = \oint_{\Gamma} \fnc{U} \fnc{V} n_{x}\, d\Gamma
\end{equation*}
for all polynomials $\fnc{U}$ and $\fnc{V}$ whose total degree is less than
$\tau \geq p$.  In particular, if we choose $\fnc{U}$ and $\fnc{V}$ to be
nodal basis functions on the faces, we can isolate the entries of $\Ex$.  This
is possible because we have insisted on operators with ${ p+d-1 \choose d-1}$
nodes on each facet, which leads to a complete nodal basis.  For further details
on the construction of the boundary operators, we direct the interested reader
to \cite[pg.~187]{hesthaven:2008}.

\begin{remark}
  The boundary operators, when restricted to the boundary nodes, are dense
  matrices.  Contrast this with the tensor-product case, where the boundary
  operators are diagonal matrices.  In the simplex case, we have not found a way
  to construct diagonal $\Ex$, $\Ey$ and $\Ez$ that are sufficiently accurate.
\end{remark}

\subsection{The antisymmetric part}\label{sec:Qx_construct}

The accuracy conditions are used to determine the antisymmetric matrices $\QAx$,
$\QAy$, and $\QAz$.  We will describe the process for $\QAx$, since it can be
adapted in a straightforward way to $\QAy$ and, in the case of the tetrahedron,
$\QAz$.

In theory, we can compute $\QAx$ using the monomials that appear in the SBP
operator definition; however, the monomials are known to produce ill-conditioned
Vandermonde matrices.  Instead, we follow the standard practice in
spectral-element methods and apply the accuracy conditions to appropriate
orthogonal bases on the triangle and tetrahedron~\cite{Koornwinder1975,
  Dubiner1991, hesthaven:2008}.  Unlike finite- and spectral-element methods,
the basis alone does not completely specify an SBP operator.

Let $\P$ and $\Px$ be the matrices whose columns are the orthogonal basis
function values and derivatives, respectively, evaluated at the nodes.  Then the
accuracy conditions imply $\Dx \P = \Px$, or, in terms of the unknown $\QAx$,
\begin{equation*}
  \QAx \P = \M \Px - \frac{1}{2} \Ex \P.
\end{equation*}
This can be recast as the linear system
\begin{equation}
  \mat{A} \bm{q} = \bm{b},
  \label{eq:Qx_equation}
\end{equation}
where $\bm{q}$ denotes a vector whose entries are the strictly lower part of
$\QAx$:
\begin{equation*}
  \bm{q}\left({\scriptstyle \frac{(i-2)(i-1)}{2}} + j\right) = \left(\QAx\right)_{i,j}, \qquad 2 \leq i \leq n, 1 \leq j < i.
\end{equation*}
There are $(n-1)n/2$ unknowns and $n\times {p+d \choose d}$ equations in
\eqref{eq:Qx_equation}; thus, for the operators considered here, there are more
equations than unknowns.  Fortunately, the compatibility conditions ensure that
the system is consistent.  Indeed, for $p \geq 3$ the rank of $\mat{A}$ is
actually less than the size of $\bm{q}$, so there are an infinite number of
solutions.  In these cases, we choose the minimum-norm least-squares
solution~\cite{golub:1996} to \eqref{eq:Qx_equation}.

\ignore{David C. DRF: Why are we using a least square solution? Dont we want to
  satisfy the degree equations to within machine error? In the context of PnPM
  DG methods it has been shown that the least squares solution leads to worse
  operators (I will send the reference later tonight)}

\ignore{JEH: When the problem is rank-deficient, as it is here, the minimum-norm
  least-squares solution does satisfy all of the equations}

\subsection{Similarities and differences with existing operators}

\ignore{The $p=1$ operators are identical to mass-lumped linear
  finite-elements.}

There is a vast literature on high-order discretizations for simplex elements,
so we focus on the two that share the most in common with the proposed SBP
operators: the diagonal mass-matrix spectral-element (SE) method
\cite{Cohen2001, Mulder2001, Giraldo2006} and the spectral-difference (SD)
method~\cite{Liu2006}.

Our norm matrices $\M$ are identical to the lumped mass matrices in the SE
method.  The difference between the methods arises in the definition of $\Qx$.
In the SE method of Giraldo and Taylor~\cite{Giraldo2006}, the $\Qx$ matrix is
defined as
\begin{equation*}
  \left(\Qx^{\text{SE}}\right)_{i,j} = \sum_{k=0}^{n} \M_{k,k} \phi_{i}(x_{k},y_{k}) \frac{\partial \phi_{j}}{\partial x}(x_{k},y_{k}),
\end{equation*}
where $\left\{\phi_{i}\right\}_{i=1}^{n}$ is the so-called cardinal basis.  For
a degree $p$ operator, the cardinal basis is a polynomial nodal basis that is a
super-set of the basis for degree $p$ polynomials; the basis contains polynomials
of degree greater than $p$, because the number of nodes $n$ is greater than
${p+d \choose d}$, in general.  Consequently, the cubature defined by $\M$ is
not exact for the product $\phi_{i} \partial \phi_{j}/\partial x$ when $p \geq 2$,
and the resulting $\Qx^{\text{SE}}$ does not satisfy the SBP definition for the
$p \geq 2$ discretizations.  Indeed, as the results below demonstrate, the
higher-order SE operators are unstable and require filtering or numerical
dissipation \emph{even for linear problems}.

\begin{remark}
  The $\Qx^{\text{SE}}$ matrix in the diagonal mass-matrix SE method can be made
  to satisfy the SBP definition by using a cubature rule that is exact for the
  cardinal basis; however, such a cubature rule would require additional
  cubature points and would defeat the purpose (\ie efficiency) of collocating
  the cubature and basis nodes.
\end{remark}

Diagonal-norm SBP operators can also be viewed as a special case of the SD
method in which the unknowns and fluxes are collocated.  As pointed out in
\cite{Liu2006}, this means that our proposed SBP operators require more unknowns
to achieve a given accuracy than spectral-difference methods; however,
collocation eliminates the reconstruction step, so there is a tradeoff between
memory and floating-point operations.  More importantly, this relative increase
in unknowns applies only to discontinuous methods.  If we assemble a global SBP
operator, as described below, then the number of unknowns can be significantly
reduced.

\subsection{Assembly of global SBP operators from elemental operators}\label{sec:construct_global}

The SBP operators defined in Sections
\ref{sec:norm_and_nodes}--\ref{sec:Qx_construct} can be used in a nodal DG
formulation~\cite{hesthaven:2008} with elements coupled weakly using, for
example, simultaneous approximation terms~\cite{funaro:1988, Carpenter1994}.  An
alternative use for these element-based operators, and the one pursued here, is
to mimic the continuous Galerkin formulation.  That is, we assemble global SBP
operators from the elemental ones.

We need to introduce some additional notation to help describe the assembly
process and facilitate the proof of Theorem~\ref{thrm:global_assembly} below.
Suppose the domain $\Omega$ is partitioned into a set of $L$ non-overlapping
subdomains $\Omega^{(l)}$ with boundaries $\Gamma^{(l)}$:
\begin{equation*}
  \Omega = \bigcup_{l=1}^{L} \bar{\Omega}^{(l)},
  \qquad\text{and}\qquad
  \Omega^{(k)} \cap \Omega^{(l)} = \emptyset, \; \forall\; k \neq l,
\end{equation*}
where $\bar{\Omega}^{(l)} = \Omega^{(l)} \cup \Gamma^{(l)}$ denotes the closure
of $\Omega^{(l)}$.

Each subdomain is associated with a set of nodes $S^{(l)} \equiv
\{(x_{i}^{(l)},y_{j}^{(l)})\}_{i=1}^{n}$, such that $(x_{i}^{(l)},y_{i}^{(l)})
\in \bar{\Omega}^{(l)}$.  In the present context, some of the nodes in $S^{(l)}$
will lie on the boundary $\Gamma^{(l)}$ and be shared by adjacent subdomains.

Let $S \equiv \cup_{l} S^{(l)}$.  Suppose there are $\nglob$ unique nodes in
$S$, and let each node be assigned a unique global index.  Suppose $\iglob$
is the global index corresponding to the $i^{\text{th}}$ local node of element
$l$.  We define $\mat{Z}^{(l)}(i,j)$ to be the $\nglob \times \nglob$
matrix with zeros everywhere except in the $(\iglob,\jglob)$ entry, which
is unity.  If $e_{\iglob}$ denotes the $\iglob$ column of the
$\nglob\times \nglob$ identity, then $\mat{Z}^{(l)}(i,j) = e_{\iglob}
e_{\jglob}\Tr$.

We can now state and prove the following result.

\begin{theorem}\label{thrm:global_assembly}
  Let $\Dx^{(l)} = \left(\M^{(l)}\right)^{-1} \Qx^{(l)}$ be a degree $p$ SBP operator for
  the first derivative $\partial/\partial x$ on the node set $S^{(l)}$.  If
  \begin{align*}
    \M &\equiv \sum_{l=1}^{L} \sum_{i=1}^{n} \M^{(l)}_{i,i} \mat{Z}^{(l)}(i,i) \\
    \Qx &\equiv \sum_{l=1}^{L} \sum_{i=1}^{n} \sum_{j=1}^{n} \left(\Qx^{(l)}\right)_{i,j} \mat{Z}^{(l)}(i,j),
  \end{align*}
  then $\Dx = \M^{-1} \Qx$ is a degree $p$ SBP operator on the global node set $S$.
\end{theorem}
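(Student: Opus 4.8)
The plan is to recast the two assembly sums with Boolean scatter (prolongation) matrices and then verify the three conditions of Definition~\ref{DEFSBPgen2} in turn, reducing each global statement to its already-proven elemental counterpart. For each subdomain I would introduce $\mat{P}^{(l)} \in \mathbb{R}^{\nglob \times n}$ whose $i^{\text{th}}$ column is the global unit vector $e_{\iglob}$, so that $\mat{Z}^{(l)}(i,j) = \mat{P}^{(l)} \bm{e}_{i} \bm{e}_{j}\Tr (\mat{P}^{(l)})\Tr$ with $\bm{e}_{i},\bm{e}_{j}$ the local unit vectors. The two definitions then collapse to $\M = \sum_{l} \mat{P}^{(l)} \M^{(l)} (\mat{P}^{(l)})\Tr$ and $\Qx = \sum_{l} \mat{P}^{(l)} \Qx^{(l)} (\mat{P}^{(l)})\Tr$. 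The consistency fact I will lean on throughout is that $(\mat{P}^{(l)})\Tr$ gathers a global nodal vector onto the nodes of element $l$: because shared nodes carry identical physical coordinates and the monomials are single-valued functions of those coordinates, $\pk^{(l)} \equiv (\mat{P}^{(l)})\Tr \pk$ is exactly the restriction of $\fpk$ to $S^{(l)}$, and likewise $(\mat{P}^{(l)})\Tr \dxpk$ is the restriction of $\partial\fpk/\partial x$.

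Properties~\ref{sbp:H} and~\ref{sbp:accuracy} then fall out quickly. Since every $\M^{(l)}$ is diagonal with positive entries, $\M$ is diagonal and its $\iglob^{\text{th}}$ entry is a sum of positive cubature weights over all elements containing that node; hence $\M$ is symmetric positive-definite. For accuracy I would write $\Qx \pk = \sum_{l} \mat{P}^{(l)} \Qx^{(l)} \pk^{(l)}$ and invoke the elemental identity $\Qx^{(l)} \pk^{(l)} = \M^{(l)} (\mat{P}^{(l)})\Tr \dxpk$ (equivalent to $\Dx^{(l)}\pk^{(l)} = (\mat{P}^{(l)})\Tr\dxpk$) to obtain $\Qx \pk = \sum_{l} \mat{P}^{(l)} \M^{(l)} (\mat{P}^{(l)})\Tr \dxpk = \M \dxpk$, so that $\Dx \pk = \dxpk$ for every $k \le \nmin{p}$.

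For property~\ref{sbp:Ex} I would assemble the two parts separately, $\QAx \equiv \sum_{l} \mat{P}^{(l)} \QAx^{(l)} (\mat{P}^{(l)})\Tr$ and $\Ex \equiv \sum_{l} \mat{P}^{(l)} \Ex^{(l)} (\mat{P}^{(l)})\Tr$, so that $\Qx = \QAx + \frac{1}{2}\Ex$ by linearity. Transposing a single summand flips the sign of the $\QAx^{(l)}$ term and leaves the $\Ex^{(l)}$ term fixed, so $\QAx$ is antisymmetric and $\Ex$ is symmetric. It then remains to verify the surface-integral accuracy of $\Ex$, and here the gather identity gives $\pk\Tr \Ex \pM = \sum_{l} (\pk^{(l)})\Tr \Ex^{(l)} \pM^{(l)} = \sum_{l} \oint_{\Gamma^{(l)}} \fpk \fpm n_{x}\,\mr{d}\Gamma$ for all $k,m \le \nmin{p}$, where the last equality is the elemental boundary property and is an \emph{exact} integral, not a quadrature.

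The one step that is geometric rather than purely algebraic, and the part I expect to be the crux, is showing that $\sum_{l} \oint_{\Gamma^{(l)}}$ telescopes to the single global integral $\oint_{\Gamma}$. I would split each $\Gamma^{(l)}$ into facets lying on the global boundary $\Gamma$ and interior facets shared with a neighbour; the former reassemble precisely into $\Gamma$. On an interior facet shared by $\Omega^{(l)}$ and $\Omega^{(l')}$ the integrand $\fpk\fpm$ is a global polynomial, hence single-valued on the facet, while the outward normals obey $\bm{n}^{(l)} = -\bm{n}^{(l')}$; since the two elemental contributions are \emph{exact} integrals of the same function against opposite normals, they cancel identically. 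After cancellation only the global-boundary facets survive, yielding $\pk\Tr \Ex \pM = \oint_{\Gamma} \fpk \fpm n_{x}\,\mr{d}\Gamma$ and completing property~\ref{sbp:Ex} with global boundary degree $\tau = \min_{l} \tau^{(l)} \ge p$. Hence $\Dx = \M^{-1}\Qx$ satisfies all three conditions and is a degree $p$ SBP operator on $S$.
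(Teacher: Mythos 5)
Your proposal is correct and follows essentially the same route as the paper's proof: verify the three properties of Definition~\ref{DEFSBPgen2} directly, obtaining accuracy from the elemental accuracy conditions together with the reassembly of $\M$, positivity of $\M$ by construction, and property~\ref{sbp:Ex} from the assembled symmetric/antisymmetric parts plus the analytic cancellation of interior boundary fluxes. Your scatter-matrix notation $\mat{Z}^{(l)}(i,j) = \mat{P}^{(l)} \bm{e}_{i}\bm{e}_{j}\Tr (\mat{P}^{(l)})\Tr$ and your explicit facet-by-facet cancellation argument are just cleaner packagings of the index manipulations and the one-line cancellation remark in the paper, not a different proof.
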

\begin{proof}
  We need to check each of the three properties in Definition~\ref{DEFSBPgen2}.
  
  \begin{remunerate}
  \item The first property is straightforward, albeit tedious, to verify.
    $\M^{-1}$ exists by property \ref{sbp:H}, which is shown to hold independently below,
    so we have
    \begin{align*}
      \Dx\pk
      &= \M^{-1}\Qx\pk \\
      &= \M^{-1} \left[\sum_{l=1}^{L} \sum_{i=1}^{n} \sum_{j=1}^{n} \left(\Qx^{(l)}\right)_{i,j} \mat{Z}^{(l)}(i,j) \right] \pk \\
      &= \M^{-1} \sum_{l=1}^{L} \sum_{i=1}^{n} \frac{\M^{(l)}_{i,i}}{\M^{(l)}_{i,i}} 
        \sum_{j=1}^{n} \left(\Qx^{(l)}\right)_{i,j} e_{\iglob}\fpk(x_{\jglob},y_{\jglob}) \\
      &= \M^{-1} \sum_{l=1}^{L} \sum_{i=1}^{n} \M^{(l)}_{i,i} e_{\iglob} 
        \sum_{j=1}^{n} \left(\Dx^{(l)}\right)_{i,j} \fpk(x_{j},y_{j}) \\
      &= \M^{-1} \sum_{l=1}^{L} \sum_{i=1}^{n} \M^{(l)}_{i,i} e_{\iglob} \dxfpk(x_{i},y_{i})\\
      &= \M^{-1} \left[\sum_{l=1}^{L} \sum_{i=1}^{n} \M^{(l)}_{i,i} \mat{Z}^{(l)}(i,i) \right]
    \dxpk,
      \end{align*}
    But the term in brackets above is the definition of $\M$, so we are left
    with $\Dx\pk = \dxpk$, as desired.

\ignore{    
  \item We form the decomposition $\Qx = \QAx + \frac{1}{2} \Ex$ where
    \begin{align*}
      \QAx &= \sum_{l=1}^{L} \sum_{i=1}^{n} \sum_{j=1}^{n} \left(\QAx\right)^{(l)}(i,j) \mat{Z}^{(l)}_{i,j}, \\
      \Ex &= \sum_{l=1}^{L} \sum_{i=1}^{n} \sum_{j=1}^{n} \Ex^{(l)}(i,j) \mat{Z}^{(l)}_{i,j}.
    \end{align*}
    The matrix $\QAx$ is antisymmetric, because it is the sum of antisymmetric
    matrices.  Similarly, $\Ex$ is symmetric, because it is the sum of symmetric
    matrices.  Hence, property \ref{sbp:sym} is satisfied.
}

  \item $\M$ is clearly diagonal and positive by construction, so property \ref{sbp:H} is
    satisfied.

  \item Finally, we form the decomposition $\Qx = \QAx + \frac{1}{2} \Ex$ where
    \begin{align*}
      \QAx &= \sum_{l=1}^{L} \sum_{i=1}^{n} \sum_{j=1}^{n} \left(\QAx\right)^{(l)}(i,j) \mat{Z}^{(l)}_{i,j}, \\
      \Ex &= \sum_{l=1}^{L} \sum_{i=1}^{n} \sum_{j=1}^{n} \Ex^{(l)}(i,j) \mat{Z}^{(l)}_{i,j}.
    \end{align*}
    The matrix $\QAx$ is antisymmetric, because it is the sum of antisymmetric
    matrices.  Similarly, $\Ex$ is symmetric, because it is the sum of symmetric
    matrices.  In addition, 
    \begin{align*}
      \pk\Tr \Ex 
      \pM
      &= \sum_{l=1}^{L} \sum_{i=1}^{n} \sum_{j=1}^{n} \Ex^{(l)}(i,j)
      \pk\Tr \mat{Z}^{(l)}_{i,j} \pM \\
      &= \sum_{l=1}^{L} \oint_{\Gamma^{(l)}} \fpk\fpm n_{x}\mr{d}\Gamma \\
      &= \oint_{\Gamma}\fpk\fpm n_{x}\mr{d}\Gamma,
    \end{align*}
    where we have used the fact that the boundary fluxes of adjacent elements
    cancel analytically.  Thus, property~\ref{sbp:Ex} is satisfied.
  \end{remunerate}\qquad\end{proof}

\section{Results}\label{sec:results}

The two-dimensional linear advection equation is used to verify and study the
triangular-element SBP operators of Section~\ref{sec:construct}.  In particular,
we consider the problem
\begin{align*}
  &\frac{\partial \fnc{U}}{\partial t} + \frac{\partial \fnc{U}}{\partial x} +
  \frac{\partial \fnc{U}}{\partial y} = 0,\qquad \forall\; (x,y) \in \Omega = [0,1]^2, \\
  &\fnc{U}(x,0,t) = \fnc{U}(x,1,t),\qquad\text{and}\qquad \fnc{U}(0,y,t) = \fnc{U}(1,y,t), \\
  &\fnc{U}(x,y,0) = \begin{cases}
    1 - (4r^2-1)^5 & \text{if}\; r \leq \frac{1}{2} \\
    1, &\text{otherwise},
  \end{cases}
\end{align*}
where $r(x,y) \equiv \sqrt{(x-\frac{1}{2})^2 + (y-\frac{1}{2})^2}$.  The
boundary conditions imply periodicity in both the $x$ and $y$ directions, and
the PDE implies an advection velocity of $(1,1)$.  The initial condition is a
$C^4$ continuous bump function that is periodic on $\Omega$.

A nonuniform mesh for the square domain $\Omega$ is generated, in order to
eliminate possible error cancellations that may arise on uniform grids.  The
vertices of the mesh are given by
\begin{equation*}
  x_{i,j} = \frac{i}{N} + \frac{1}{40}\sin\left(2\pi i/N\right)\sin\left(2\pi j/N\right),\qquad
  y_{i,j} = \frac{j}{N} + \frac{1}{40}\sin\left(2\pi i/N\right)\sin\left(2\pi j/N\right),
\end{equation*}
where $N$ is the number of elements along an edge, and $i,j = 0,1,2,\ldots,N$.
The nominal element size is $h \equiv 1/N$.  A triangular mesh is generated by
dividing each quadrilateral $\{x_{i,j}, x_{i+1,j}, x_{i,j+1}, x_{i+1,j+1}\}$ along
the diagonal from $x_{i+1,j}$ to $x_{i,j+1}$.  Finally, for an SBP element of
degree $p$, the reference-element nodes are mapped (affinely) to each triangle
in the mesh.  Figure~\ref{fig:mesh} illustrates a representative mesh for $p=3$
and $N=12$.

\begin{remark}
  The use of an affine mapping on each element ensures that the mapping Jacobian
  is element-wise constant; consequently, the transformed PDE has constant
  coefficients in the reference space and the finite-difference operator remains
  SBP.  More generally, curvilinear elements would require constructing SBP operators for each curved element.
\end{remark}

We consider both continuous (C-SBP) and discontinuous (D-SBP) discretizations
using the SBP operators.  The global operators for the C-SBP discretization are
constructed using the assembly process described in
Section~\ref{sec:construct_global}.  In addition, the periodic boundaries are
transparent to the global C-SBP operator, that is, nodes that coincide on the
periodic boundary are considered the same.  The D-SBP discretization of the
surface fluxes follows the nodal DG method outlined, for example, in Hesthaven
and Warburton~\cite[Chapter 6]{hesthaven:2008}.  We use SATs with a penalty
parameter of $\sigma = 1$, which is equivalent to the use of upwind numerical
flux functions.

The classical 4th-order Runge-Kutta scheme is used to discretize the time
derivative.  The maximally stable CFL number for each SBP operator was
identified for $N=32$ using Golden-Section optimization, where the CFL number
was defined as $\sqrt{2} \Delta t/ (h \Delta r) = \sqrt{2} N \Delta t / \Delta
r$ for a time-step size of $\Delta t$; $\Delta r$ denotes the minimum distance
between nodes on a right triangle with vertices at $(0,0)$, $(1,0)$ and $(0,1)$.
Each discretization was run for one period, $t \equiv T = 1\;\textsf{unit}$, and
considered stable if the final $L^2$ norm of the solution was less than or equal
to the initial solution norm.  The results of the optimization are listed in
Table~\ref{tab:cfl}.

\begin{table}[tbp]
  \caption[]{Maximally stable CFL numbers for the SBP operators on
    the nonuniform mesh with $N=32$.\label{tab:cfl}}
  \begin{center}
    \begin{tabular}{lllll}
      & \textbf{p=1} & \textbf{p=2} & \textbf{p=3} & \textbf{p=4} \\\hline
      \rule{0ex}{3ex}C-SBP $\text{CFL}_{\max}$ & 1.885 & 2.257 & 1.816 & 1.570 \\
      D-SBP $\text{CFL}_{\max}$ & 0.696 & 1.269 & 1.157 & 1.148 \\
    \end{tabular}
  \end{center}
\end{table}

\subsection{Accuracy and efficiency studies}

Figures~\ref{fig:accuracy_CG} and \ref{fig:accuracy_DG} plot the normalized
$L^{2}$ error for the C-SBP and D-SBP discretizations, respectively, for
SBP-operator degrees one to four and a range of $N$.  Specifically, if
$\bm{u}_{0}$ is the initial solution, $\bm{u}$ is the solution at $t=T$, and
$\M$ is the appropriate SBP norm, then the error is
\begin{equation*}
  L^2 \; \mathsf{Error} = \sqrt{\left(\bm{u} - \bm{u}_{0}\right)\Tr \M \left(\bm{u} - \bm{u}_{0}\right)}.
\end{equation*}
The error is normalized by the integral norm of the initial condition.  The mesh
resolution ranges from $N=4$ to $N=64$ in increments of 4.  Each case was time
marched using a CFL number of $0.9\text{CFL}_{\max}$, which was sufficiently
small to produce negligible temporal discretization errors.

The results in Figure~\ref{fig:accuracy_DG} indicate that the D-SBP
discretizations and the odd-order C-SBP discretizations exhibit asymptotic
convergence rates of $\text{O}(h^{p+1})$. In contrast, the C-SBP discretizations
based on the $p=2$ and $p=4$ operators have convergence rates of
$\text{O}(h^{p})$. These discretizations experience even-odd decoupling, \ie
checkerboarding, which is illustrated in Figure~\ref{fig:compare} by comparing
the spatial error at $t=T$ for the $p=1$ and $p=2$ C-SBP discretizations; the
$p=1$ error is smooth whereas the $p=2$ error is oscillatory. Checkerboarding is
not unique to the SBP simplex operators, and can be observed with
one-dimensional SBP operators. We are currently investigating methods to address
this issue with the even-order C-SBP discretizations.

\ignore{The results in Figure~\ref{fig:accuracy} demonstrate relative improvement when
the operator degree is increased; however, the convergence rates of the $p=2$ and $p=4$
operators appear to be lower than the expected asymptotic rates, and
the convergence rate of the $p=3$ operator is higher than expected.  Giraldo and
Taylor also observed convergence rates that were inconsistent with the expected
rates when the diagonal mass matrix triangular spectral-element method was
applied to linear advection on the sphere.  The root cause of these unexpected
convergence rates is unclear and will be the focus of future work.
}

\begin{figure}
  \begin{minipage}[t]{0.46\textwidth}
    \centering \includegraphics[width=\textwidth]{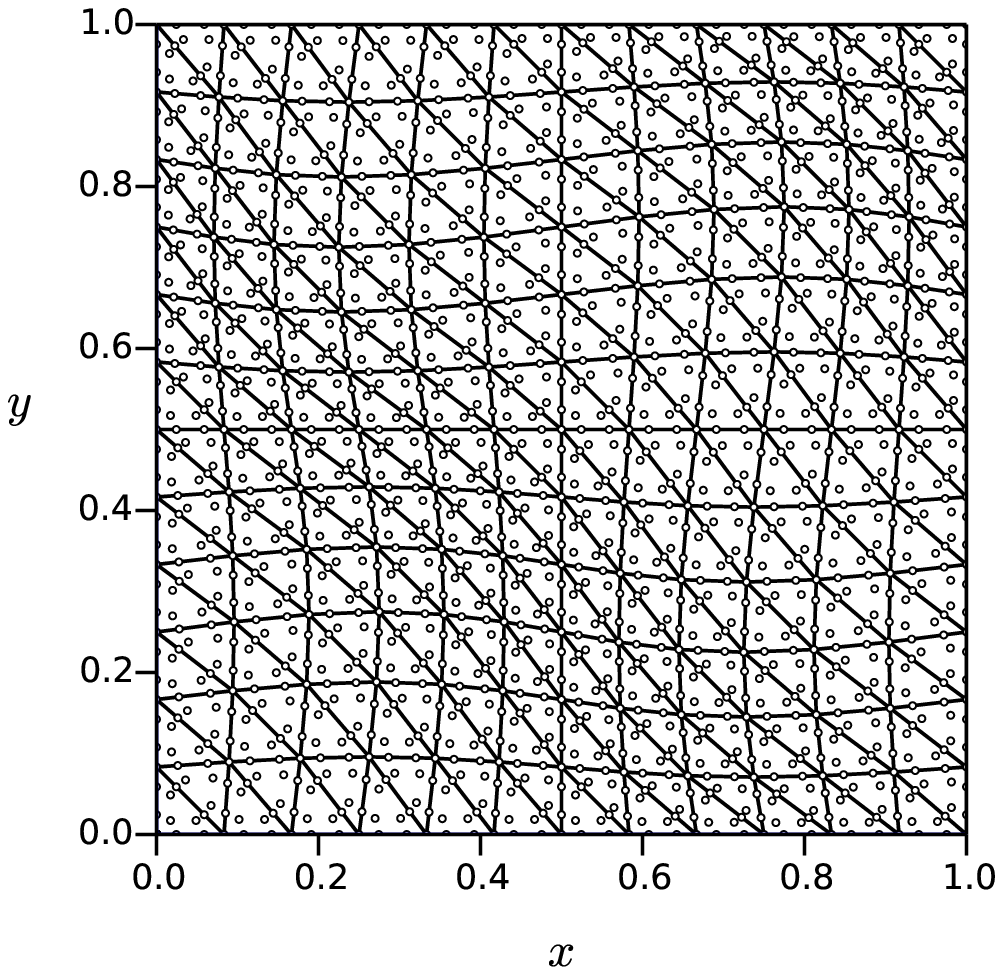}
    \caption{Example mesh with $p=3$ and $N=12$ for accuracy and energy-norm
      studies.\label{fig:mesh}}
  \end{minipage}\hfill
  \begin{minipage}[t]{0.46\textwidth}
    \centering
    \includegraphics[width=\textwidth]{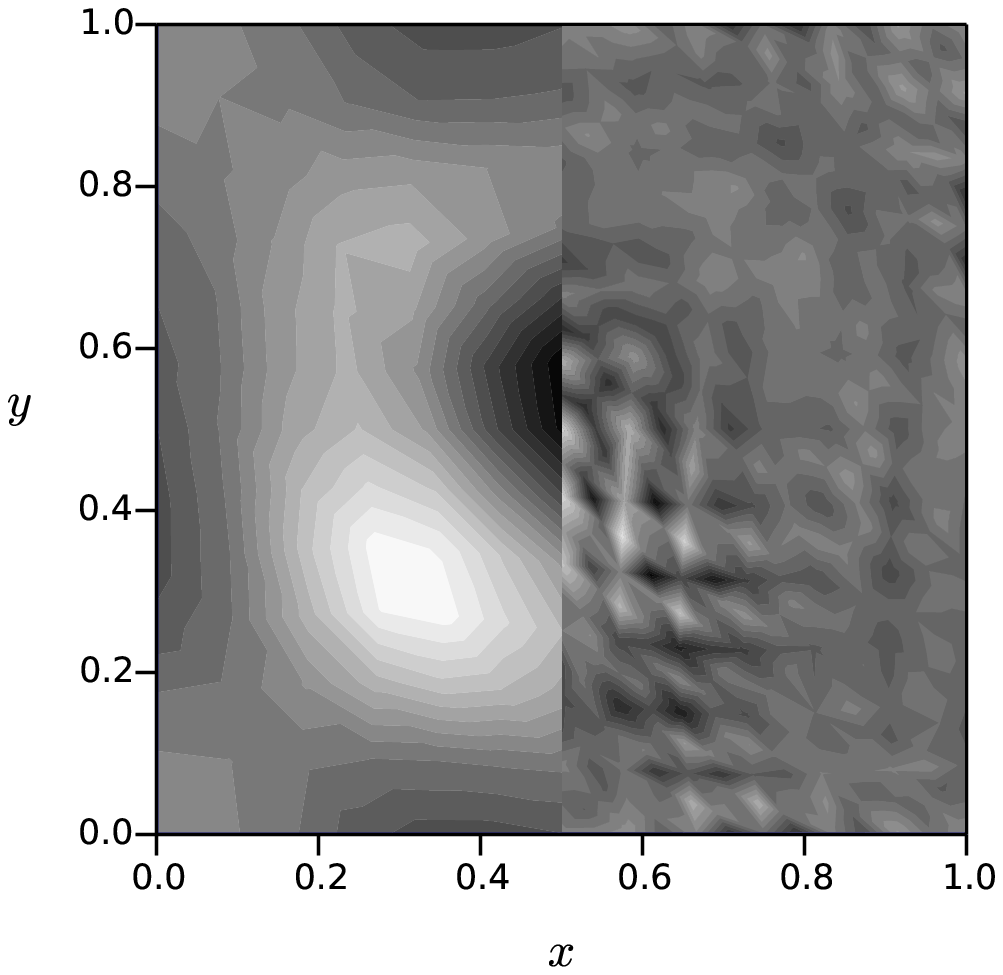}
    \caption{Solution error after one period, $t=T$, for the $p=1$ (left) and
      $p=2$ (right) C-SBP discretizations with $N=12$.\label{fig:compare}}
  \end{minipage}
\end{figure}

\begin{figure}
  \begin{minipage}[t]{0.46\textwidth}
    \centering
    \includegraphics[width=\textwidth]{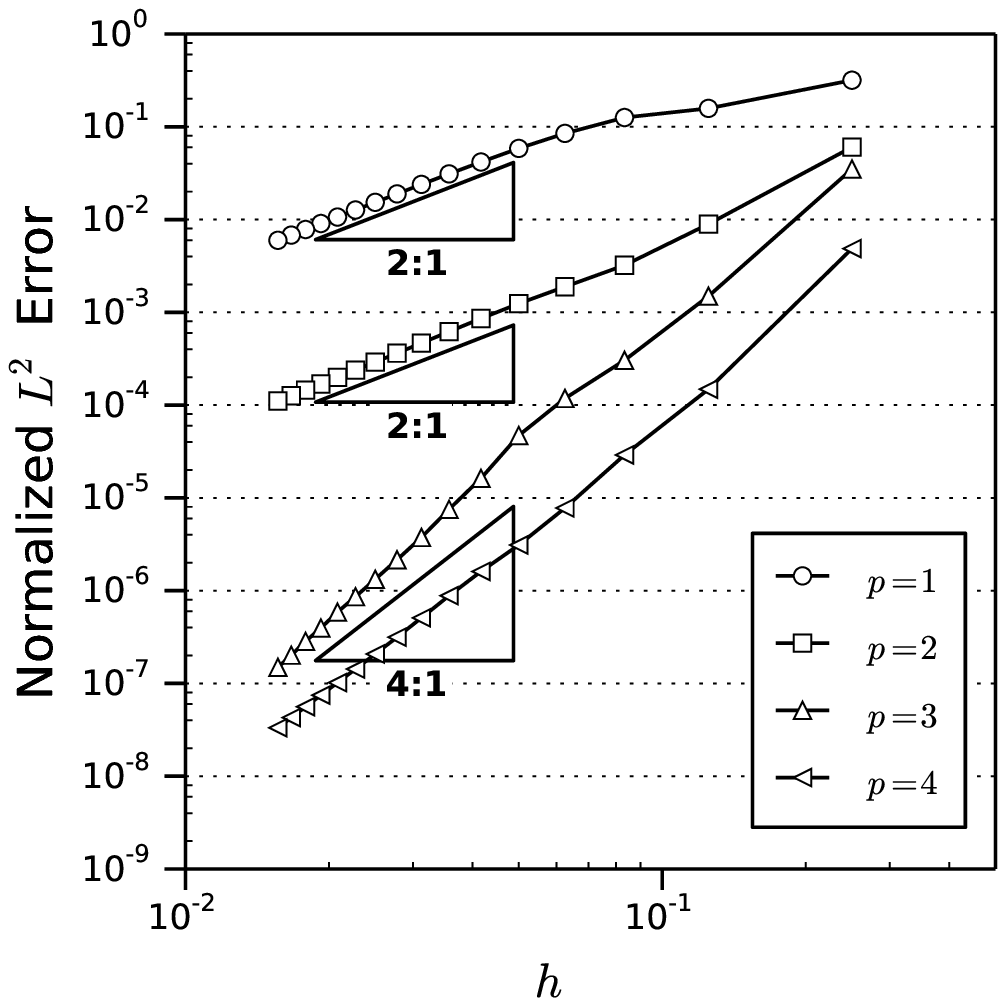}
    \caption{$L^2$ error between the C-SBP solution after one period, $t=T$, and
      the initial condition for different mesh spacing and
      operators.\label{fig:accuracy_CG}}
  \end{minipage}\hfill
  \begin{minipage}[t]{0.46\textwidth}
    \centering
    \includegraphics[width=\textwidth]{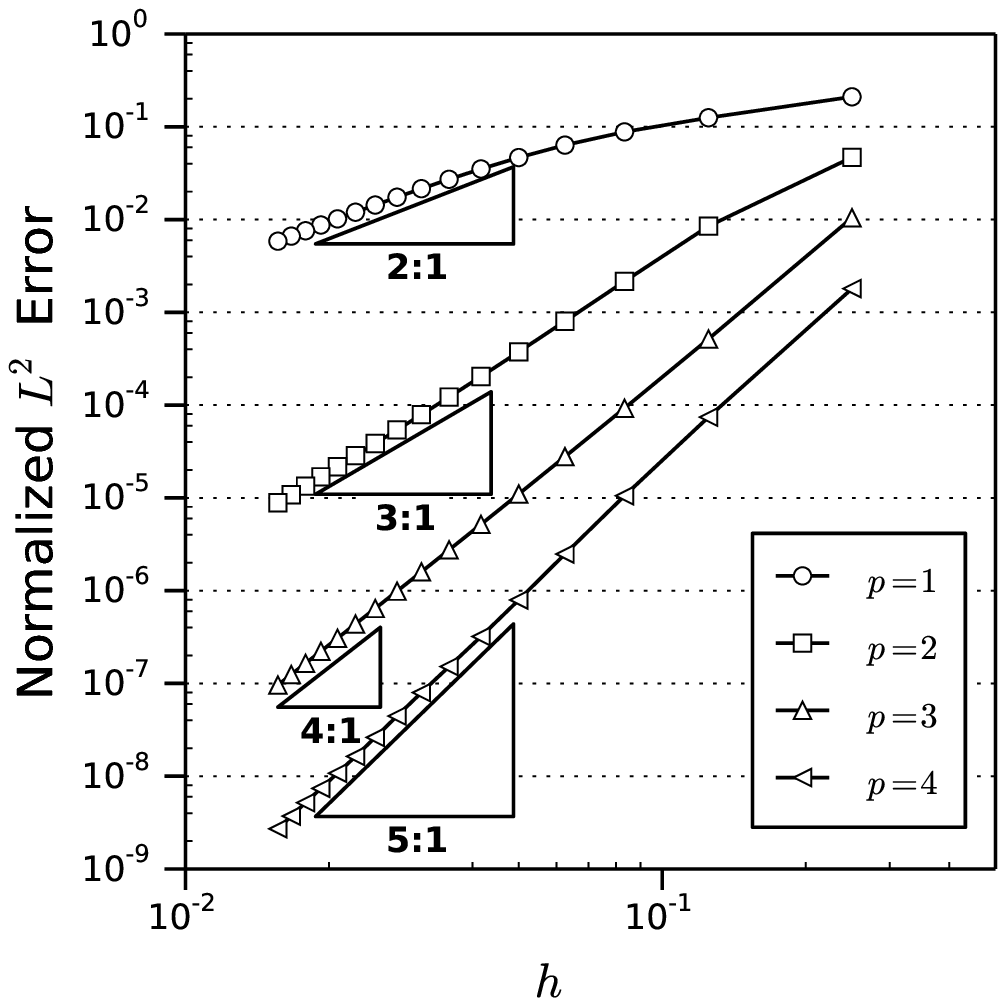}
    \caption{$L^2$ error between the D-SBP solution after one perior, $t=T$, and
      the initial condition for different mesh spacing and
      operators.\label{fig:accuracy_DG}}
  \end{minipage}
\end{figure}

Figures~\ref{fig:efficiency_CG} and \ref{fig:efficiency_DG} plot the normalized
$L^2$ error versus CPU time for both the C-SBP and D-SBP discretizations.  The
runs were performed on an Intel\textsuperscript{\textregistered} Core i5-3570K
processor, and the code was implemented in Julia version 0.4.0.  \ignore{For
  this problem, and despite checkerboarding, the continuous $p=2$ SBP
  discretization is the most efficient over the range of accuracies typical of
  engineering applications.  When the desired relative accuracy is below 0.3\%,
  the $p=4$ discretization is the most efficient.} The efficiency provided by
the high-order operators is apparent in both the continuous and discontinuous
cases.

\begin{figure}
  \begin{minipage}[t]{0.46\textwidth}
    \centering
    \includegraphics[width=\textwidth]{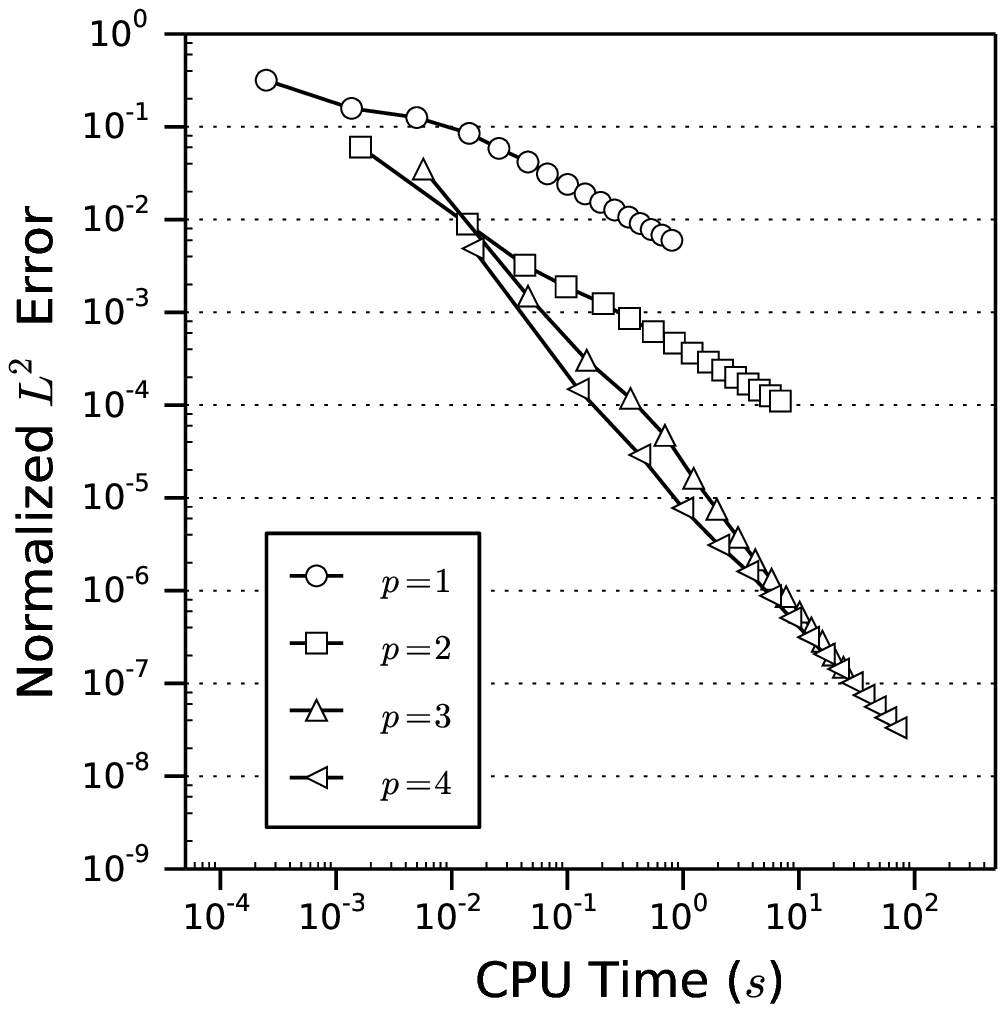}
    \caption{Normalized $L^2$ error of the C-SBP solutions after one period
      versus CPU time measured in seconds.\label{fig:efficiency_CG}}
  \end{minipage}\hfill
  \begin{minipage}[t]{0.46\textwidth}
    \centering
    \includegraphics[width=\textwidth]{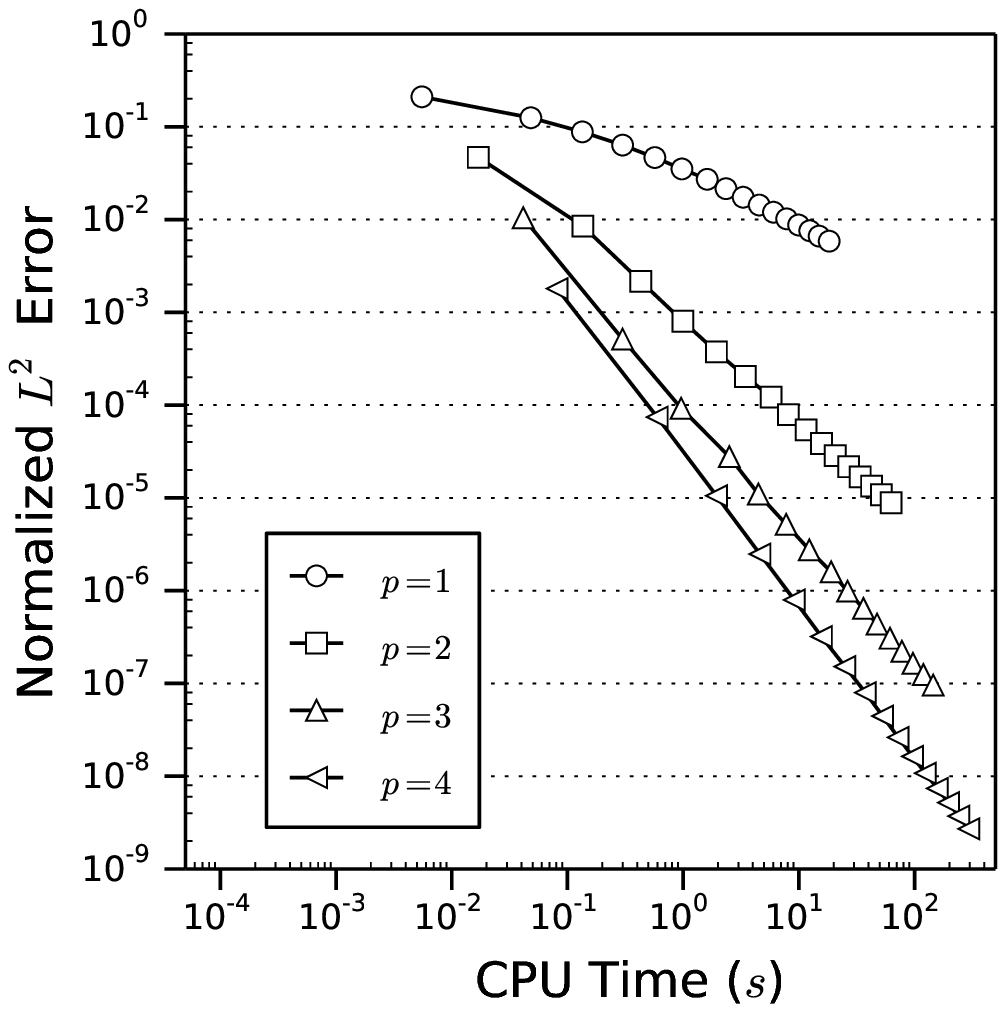}
    \caption{Normalized $L^2$ error of the D-SBP solutions after one period
      versus CPU time measured in seconds.\label{fig:efficiency_DG}}
  \end{minipage}
\end{figure}


\subsection{Stability studies}

Figure~\ref{fig:eigs} shows the spectra of the C-SBP and SE spatial
discretizations for the linear advection problem.  Specifically, these
eigenvalues are for the global operator $\Qx + \Qy$ when $N=12$.  The
eigenvalues of the C-SBP operators are imaginary to machine precision, which
mimics the continuous spectrum for this periodic problem.  This is as expected,
because the boundary operators $\Ex^{(k)}$ cancel between adjacent elements when
the SBP derivative operator is assembled, leaving only the antisymmetric parts.
The SE operator for $p=1$ also has a purely imaginary spectrum, because it is
identical to the linear SBP operator; however, the spectra of the high-order SE
operators have a real component.

The consequences of the eigenvalue distributions are evident when the linear
advection problem is integrated for two periods.  Figure~\ref{fig:energy_hist}
plots the difference between the solution $L^2$ norm at time $t \in [0,2]$ and
the initial solution norm, \ie the change in ``energy'',
\begin{equation*}
  \Delta E = \bm{u}_{n}\Tr \M \bm{u}_{n} - \bm{u}_{0}\Tr \M \bm{u}_{0},
\end{equation*}
where $\bm{u}_{n}$ denotes the discrete solution at time step $n$.  For this
study, $N=12$ and the CFL number was fixed at 0.01 to reduce temporal errors.

The energy history in Figure~\ref{fig:energy_hist} clearly shows that the SE
operators are unstable for this linear advection problem, while the SBP
operators are stable.  The small (linear) decrease in the SBP energy error is due
to temporal errors and can be eliminated by using a different time-marching
method, \eg leapfrog, or at the cost of using a sufficiently small CFL number.

\begin{figure}[tbp]
  \renewcommand{\thesubfigure}{}
  \subfigure[$p=1$ (SBP) \label{fig:eig_p1}]{%
    \includegraphics[width=0.24\textwidth]{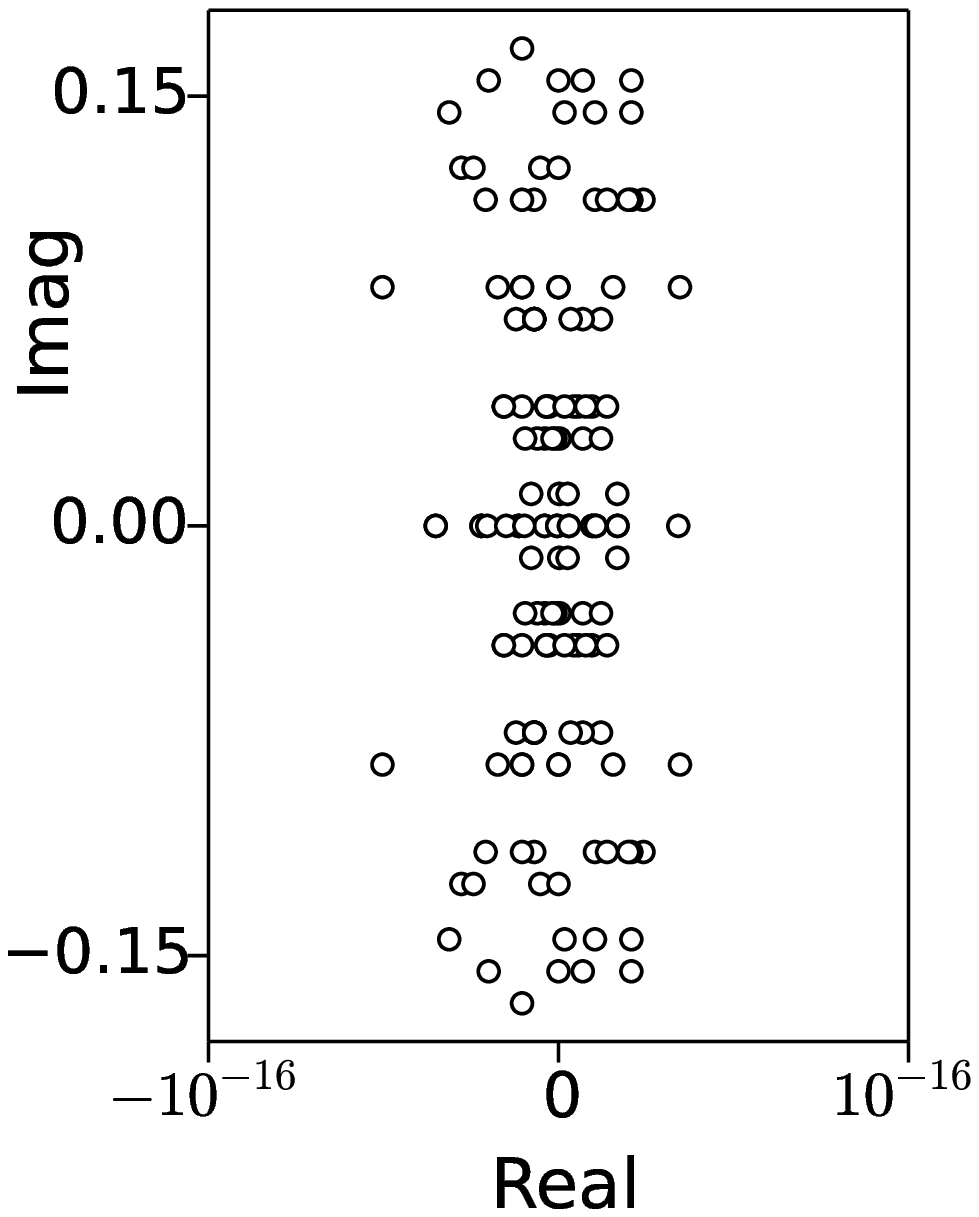}}
  \subfigure[$p=2$ (SBP) \label{fig:eig_p2}]{%
        \includegraphics[width=0.24\textwidth]{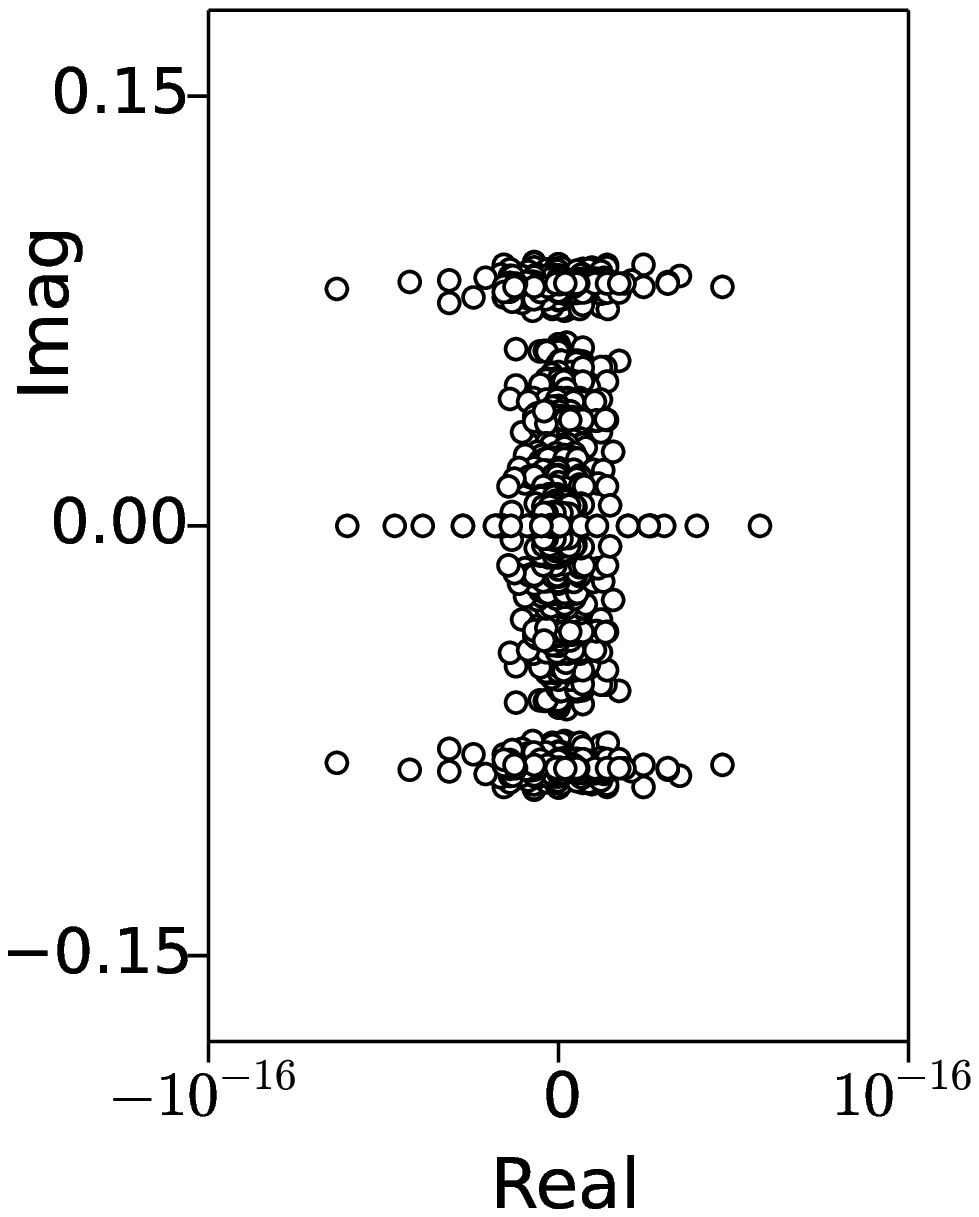}}
  \subfigure[$p=3$ (SBP) \label{fig:eig_p3}]{%
        \includegraphics[width=0.24\textwidth]{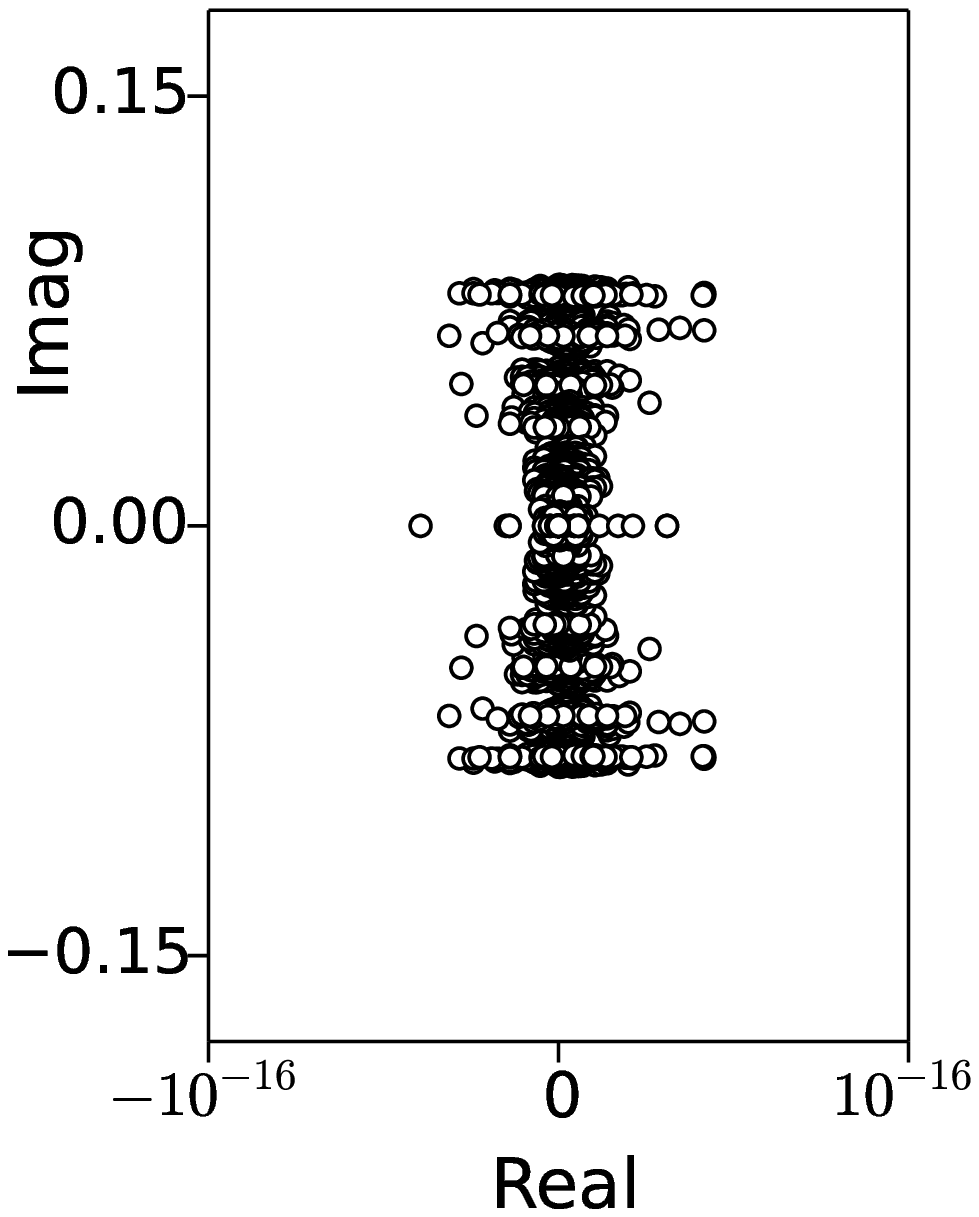}}
  \subfigure[$p=4$ (SBP) \label{fig:eig_p4}]{%
        \includegraphics[width=0.24\textwidth]{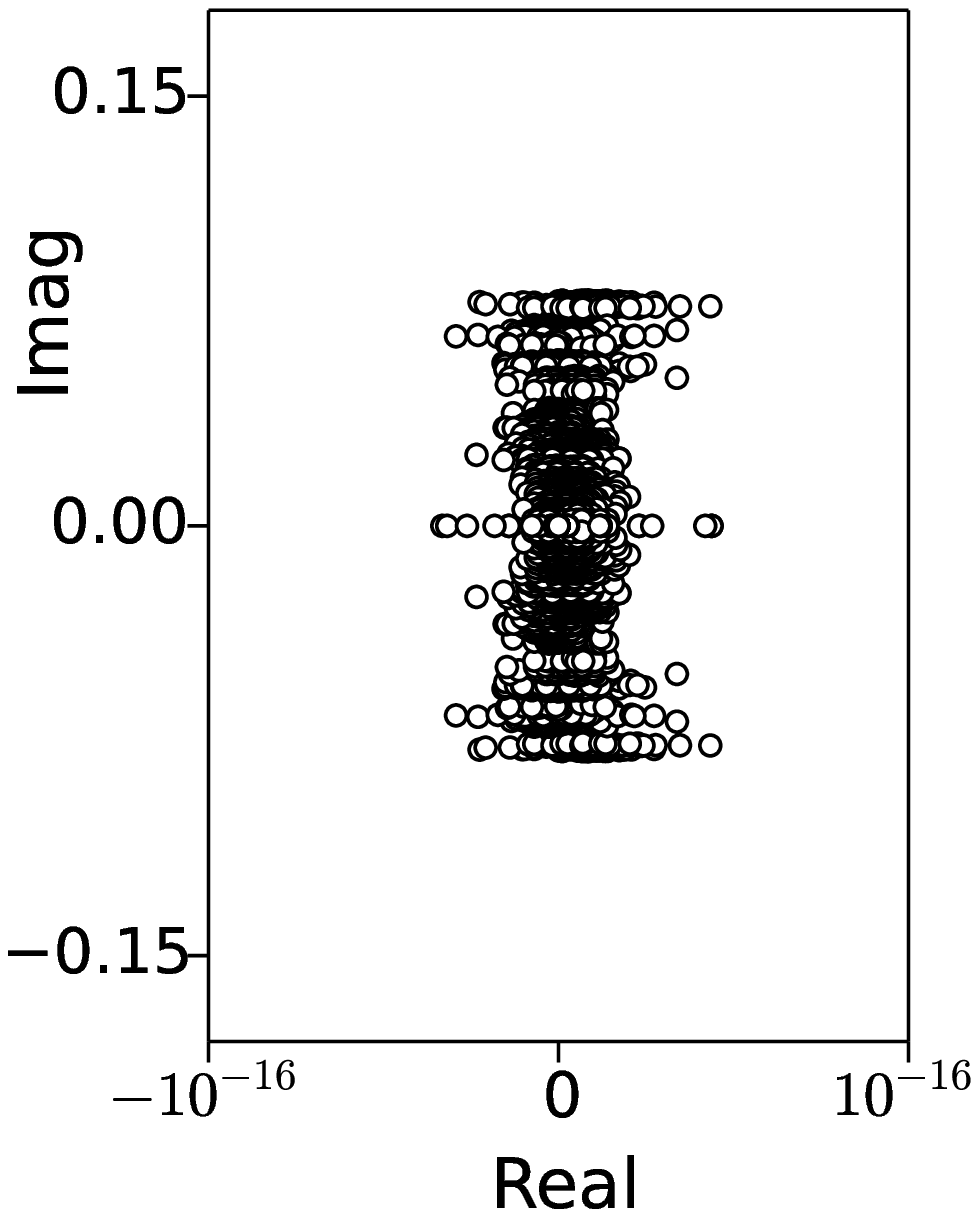}}
  \subfigure[$p=1$ (SE) \label{fig:eig_p1_SE}]{%
    \includegraphics[width=0.24\textwidth]{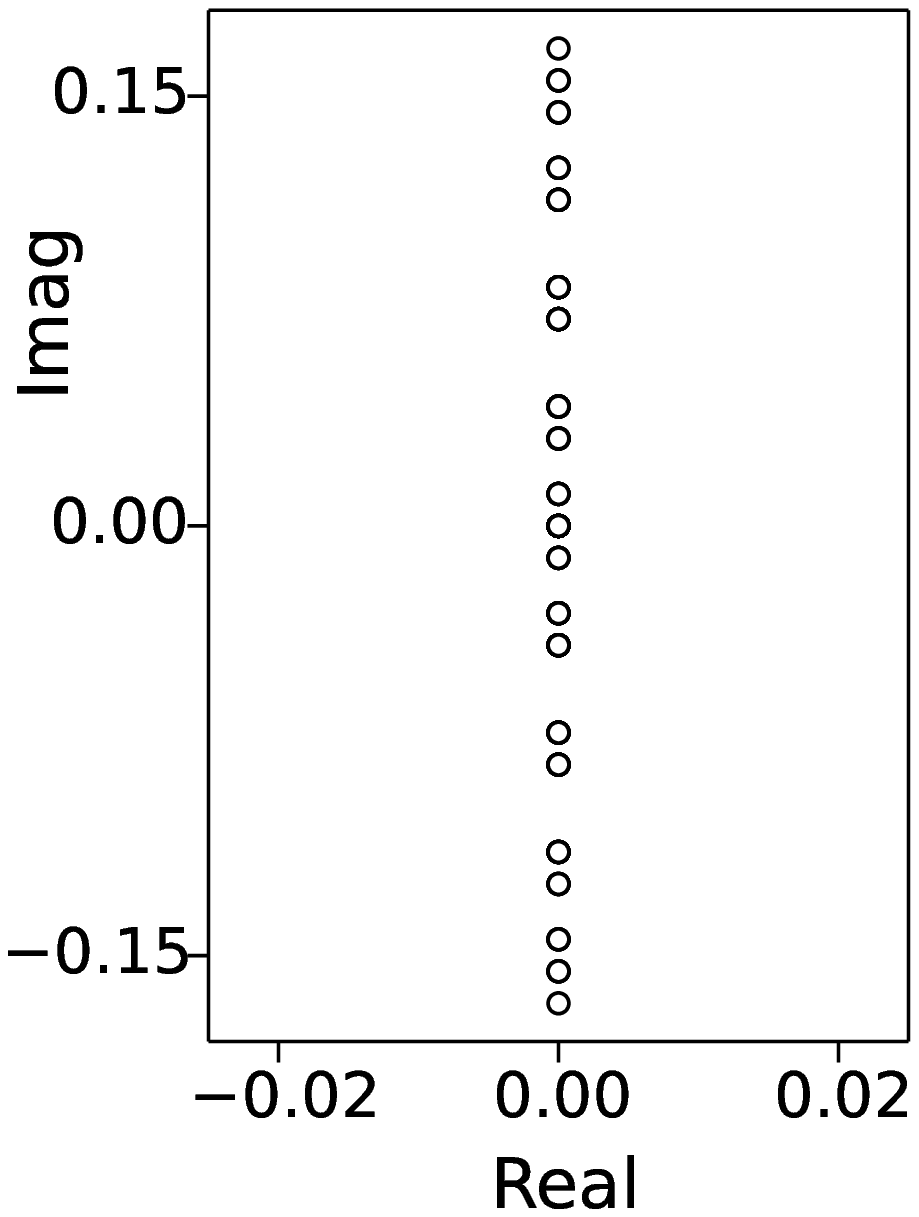}}
  \subfigure[$p=2$ (SE) \label{fig:eig_p2_SE}]{%
        \includegraphics[width=0.24\textwidth]{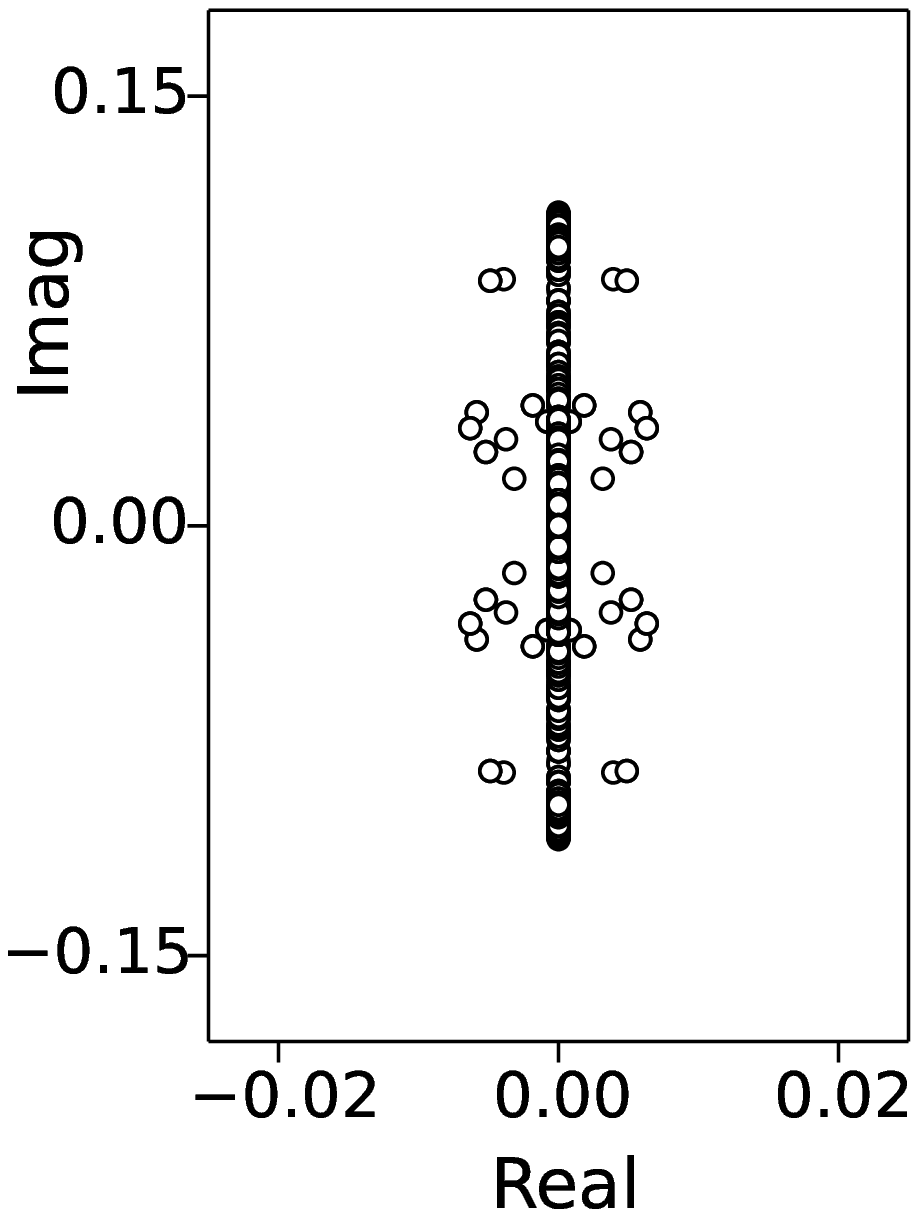}}
  \subfigure[$p=3$ (SE) \label{fig:eig_p3_SE}]{%
        \includegraphics[width=0.24\textwidth]{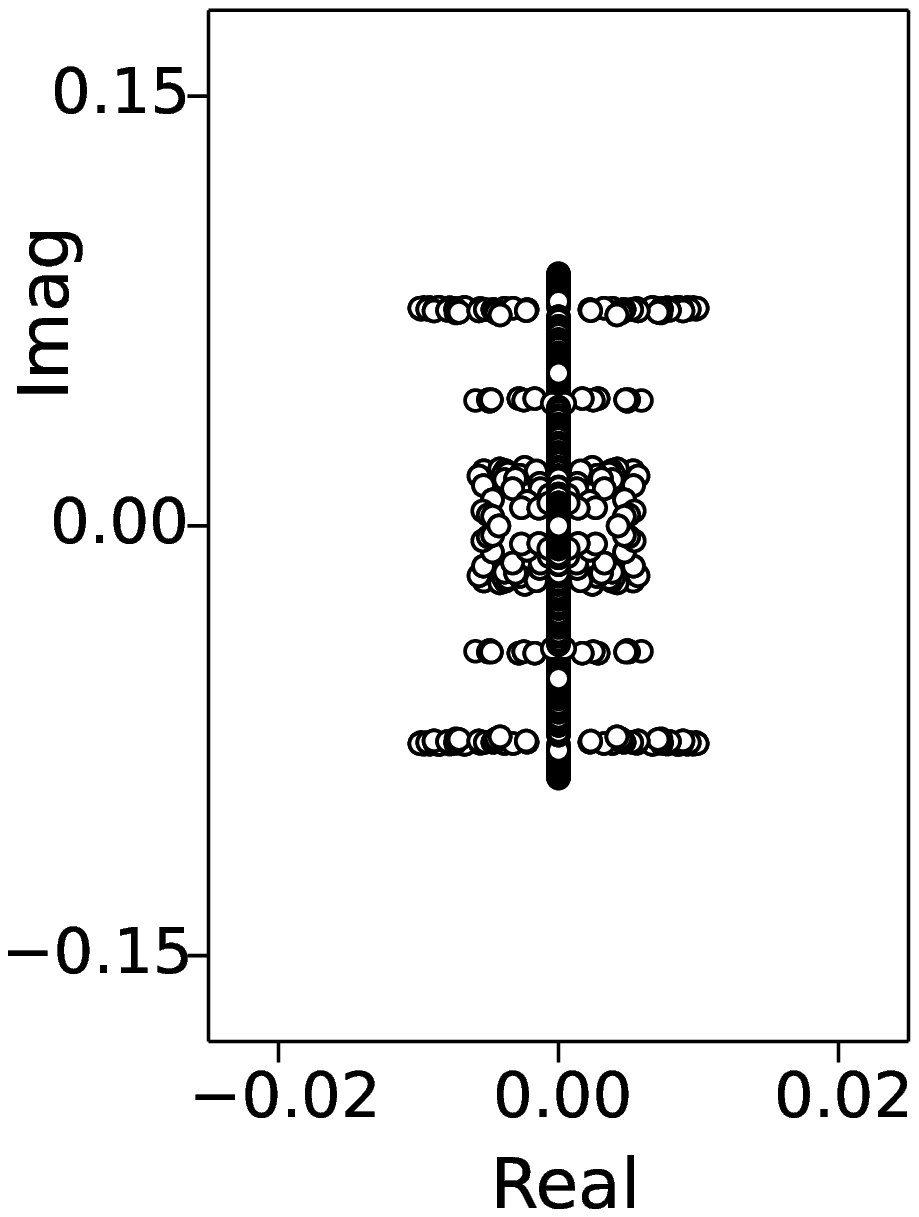}}
  \subfigure[$p=4$ (SE) \label{fig:eig_p4_SE}]{%
        \includegraphics[width=0.24\textwidth]{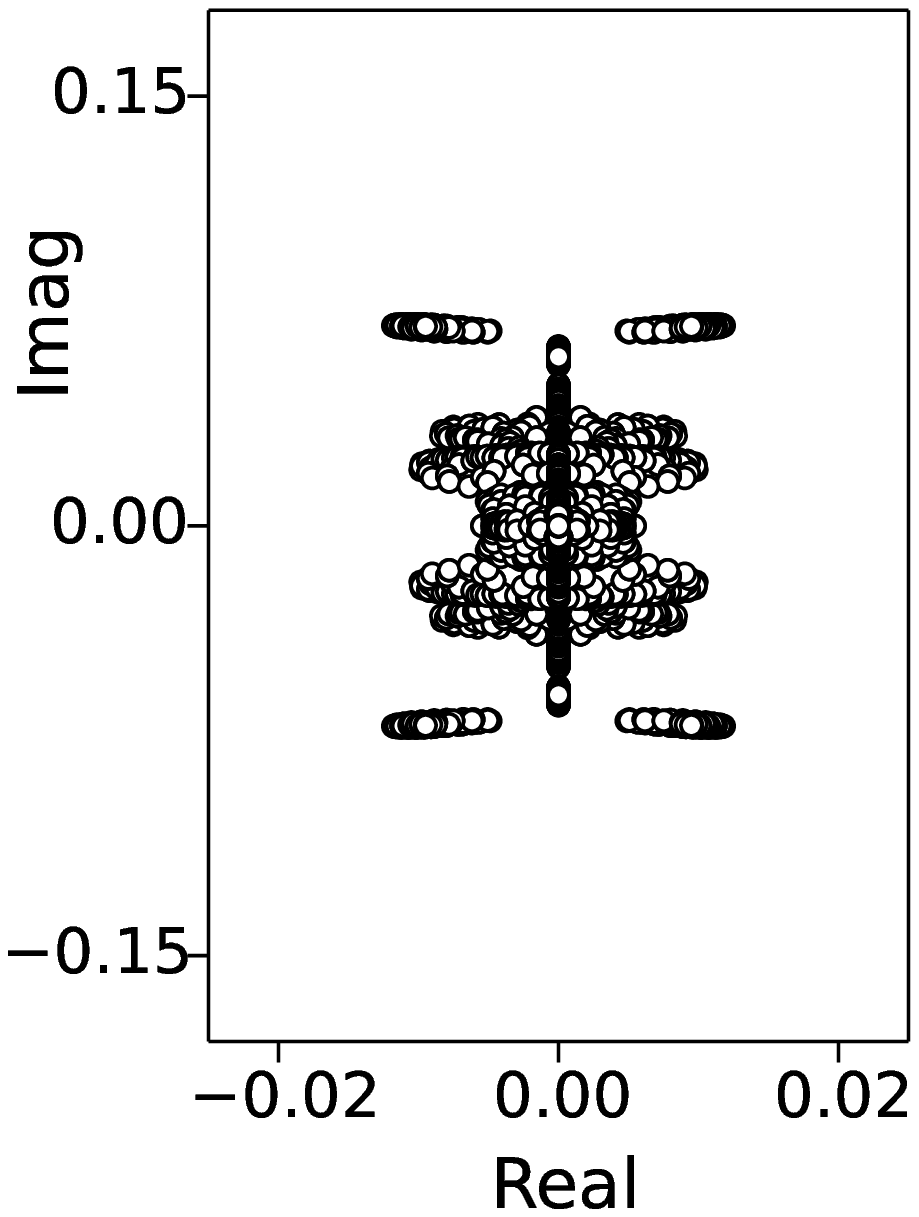}}
  \caption{Eigenvalue distributions for the SBP (upper row) and the SE (lower row) spatial discretizations of the linear advection problem.  Note the different ranges for the real axes.\label{fig:eigs}}
\end{figure}

\begin{figure*}[tbp]
 \begin{center}
 \includegraphics[width=\textwidth]{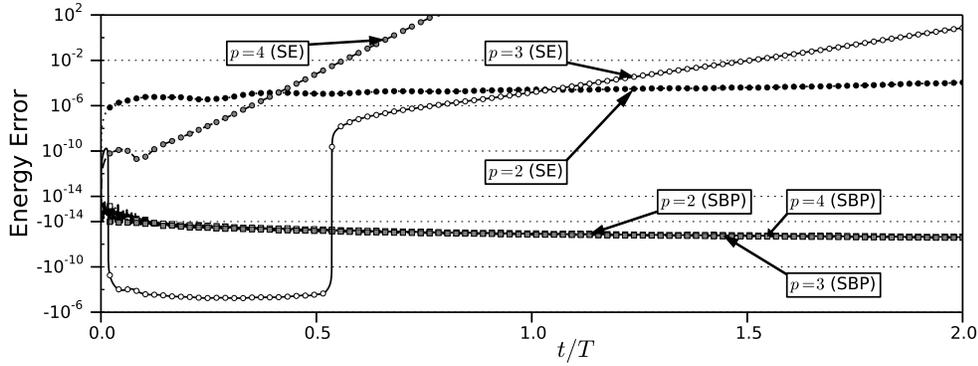}
 \caption[]{Time history of the change in the solution energy norm. Note the use
   of a symmetric logarithmic scale on the vertical axis. \label{fig:energy_hist}}
 \end{center}
\end{figure*}  

\section{Conclusions}\label{sec:conclude}

We proposed a definition for multi-dimensional SBP operators that is a natural
extension of one-dimensional SBP definitions.  We studied the theoretical
implications of the definition in the case of diagonal-norm operators, and
showed that the multi-dimensional operators retain the attractive properties of
tensor-product SBP operators.  A significant theoretical result of this work is
that, for a given domain, a cubature rule with positive weights and a full-rank
Vandermonde matrix (evaluated at the cubature nodes) are necessary and
sufficient for the existence of diagonal-norm SBP operators on that domain.  We
also developed simultaneous approximation terms (SATs) for the weak imposition
of boundary conditions, and we showed that an SBP-SAT discretization of the
linear advection equation is time stable.

We constructed diagonal-norm SBP operators for the triangle and tetrahedron.  To
the best of our knowledge, this is the first example of SBP operators of degree
$p \geq 2$ on these domains.  We also presented an assembly procedure that
constructs SBP operators for a global domain from element-wise SBP operators.

Finally, we verified the triangle-element SBP operators using both continuous
and discontinuous (\ie SAT) inter-element coupling.  Results for linear
advection on a doubly periodic domain demonstrated the time stability and
accuracy of the SBP discretizations.  The results suggest that the proposed
operators could be effective for the long-time simulation of turbulent flows on
complex domains.

\section*{Acknowledgments}

\ignore{J.~E.~Hicken acknowledges the financial support of Rensselaer Polytechnic
Institute, and D.~W.~Zingg acknowledges the support of the Natural Sciences and
Engineering Research Council (NSERC) of Canada.}

All figures were produced using Matplotlib~\cite{Hunter2007}.

\appendix

\section{Decomposition of the SAT matrix $\beta_{x} \Ex + \beta_{y} \Ey$}
\label{sec:E_decomp}
\ignore{ Let $\mat{E} \equiv \beta_{x} \Ex + \beta_{y} \Ey$.  We need to show that this
matrix can be decomposed as $\mat{E} = \mat{E}_{+} + \mat{E}_{-}$ where
$\mat{E}_{+}$ and $\mat{E}_{-}$ are positive semi-definite and negative
semi-definite, respectively, and satisfy the accuracy conditions
\begin{equation}
  \left(\bm{x}^{\ax}\circ\bm{y}^{\ay}\right)\Tr\mat{E}_{\pm}\bm{x}^{\bx}\circ\bm{y}^{\by}
  =\displaystyle\oint_{\Gamma_{\pm}}x^{\ax+\bx}y^{\ay+\by}\left( \beta_{x} n_{x} +
  \beta_{y} n_{y} \right) \mr{d}\Gamma,\qquad 
  \forall\;\ax+\ay,\bx+\by\leq \tauEx \quad k,l=0,1,\dots,\tilde{n},
  \label{eq:E_accuracy}
\end{equation}
}

Let $\mat{E} \equiv \beta_{x} \Ex + \beta_{y} \Ey$.  Recall the block-matrix
definition of $\Ex$ used in the proof of Theorem~\ref{ExistDiagH}.  Using that
definition, and a similar definition for $\Ey$, we have
\begin{equation*}
  \tV\invTr \mat{E} \tV^{-1}
  =
  \tV\invTr 
  \begin{bmatrix}
    \tilde{\mat{E}} & \mat{F}^{T} \\
    \mat{F} & \mat{G} 
  \end{bmatrix} \tV^{-1}
  \equiv
  \beta_{x} \tV\invTr \begin{bmatrix}
    \tEx & \Fx\Tr \\
    \Fx & \Gx 
  \end{bmatrix} \tV^{-1}
  + \beta_{y} \tV\invTr \begin{bmatrix}
    \tEy & \Fy\Tr \\
    \Fy & \Gy
  \end{bmatrix} \tV^{-1}.
\end{equation*}
From the definition of $\tEx$ and $\tEy$, we have that the entries in the block
$\tilde{\mat{E}}$ are given by
\begin{equation*}
  \left(\tilde{\mat{E}}\right)_{k,m}
  = \oint_{\Gamma} \fnc{P}_{k}(x,y) \fnc{P}_{m}(x,y) 
  \left(\beta_{x} n_{x} + \beta_{y} n_{y} \right) \mr{d}\Gamma\qquad 
  \forall\; k,m \in \{ 1,\dots,\nmin{\tau} \},
\end{equation*}
where $\fnc{P}_{k}$ and $\fnc{P}_{l}$ have total degrees less than or equal to
$\tauEx$.  We can decompose $\tilde{\mat{E}}$ by breaking the
above integral into two integrals, one over $\Gamma_{+}$ and one over
$\Gamma_{-}$:
\begin{multline*}
  \left(\tilde{\mat{E}}\right)_{k,m}
  = \oint_{\Gamma_{+}} \fpk\fpm 
  \left( \beta_{x} n_{x} + \beta_{y} n_{y} \right) \mr{d}\Gamma 
  + \oint_{\Gamma_{-}} \fpk\fpm 
  \left(\beta_{x} n_{x} + \beta_{y} n_{y} \right) \mr{d}\Gamma
  = \tilde{\mat{E}}_{+} + \tilde{\mat{E}}_{-},\\
   \forall\; k,m \in \{ 1,\dots,\nmin{\tau}\}.
\end{multline*}
where $\tilde{\mat{E}}_{+}$ and $\tilde{\mat{E}}_{-}$ are equated with the
integrals over $\Gamma_{+}$ and $\Gamma_{-}$, respectively.

\begin{lemma}\label{lem:tilde_E_def}
The matrix $\tilde{\mat{E}}_{-}$ is negative semi-definite and the matrix
$\tilde{\mat{E}}_{+}$ is positive semi-definite.
\end{lemma}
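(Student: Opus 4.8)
The plan is to show that each quadratic form $\bm{z}\Tr \tilde{\mat{E}}_{\pm} \bm{z}$ reduces to a boundary integral of a squared polynomial weighted by the scalar $\beta_{x} n_{x} + \beta_{y} n_{y}$, whose sign is fixed on each of $\Gamma_{+}$ and $\Gamma_{-}$ by their definitions. First I would take an arbitrary coefficient vector $\bm{z} \in \mathbb{R}^{\nmin{\tau}}$ and associate with it the polynomial $\fnc{Z}(x,y) \equiv \sum_{k=1}^{\nmin{\tau}} z_{k} \fpk(x,y)$, which has total degree at most $\tauEx$. Substituting the entrywise definition of $\tilde{\mat{E}}_{+}$ and interchanging the finite sums with the boundary integral, I expect to obtain
\begin{equation*}
  \bm{z}\Tr \tilde{\mat{E}}_{+} \bm{z}
  = \oint_{\Gamma_{+}} \left(\sum_{k} z_{k} \fpk\right)\left(\sum_{m} z_{m} \fpm\right)
    \left(\beta_{x} n_{x} + \beta_{y} n_{y}\right) \mr{d}\Gamma
  = \oint_{\Gamma_{+}} \fnc{Z}^{2} \left(\beta_{x} n_{x} + \beta_{y} n_{y}\right) \mr{d}\Gamma,
\end{equation*}
where the separable double sum collapses to the square of $\fnc{Z}$.

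The next step is to read off the sign from the definitions of $\Gamma_{\pm}$. On $\Gamma_{+}$ the weight satisfies $\beta_{x} n_{x} + \beta_{y} n_{y} \ge 0$ while $\fnc{Z}^{2} \ge 0$ pointwise, so the integrand is non-negative and the quadratic form is non-negative; since $\bm{z}$ was arbitrary, $\tilde{\mat{E}}_{+}$ is positive semi-definite. The identical computation for $\tilde{\mat{E}}_{-}$ gives $\bm{z}\Tr \tilde{\mat{E}}_{-} \bm{z} = \oint_{\Gamma_{-}} \fnc{Z}^{2} (\beta_{x} n_{x} + \beta_{y} n_{y})\, \mr{d}\Gamma$, and because $\beta_{x} n_{x} + \beta_{y} n_{y} < 0$ on $\Gamma_{-}$ by definition of the inflow boundary, the integrand is non-positive, yielding $\tilde{\mat{E}}_{-}$ negative semi-definite.

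I do not anticipate a serious obstacle here: the argument is essentially a single computation followed by a sign inspection. The only points requiring mild care are the factoring of the double sum $\sum_{k,m} z_{k} z_{m} \fpk \fpm$ into $\left(\sum_{k} z_{k} \fpk\right)^{2}$, and justifying the interchange of the finite sums with the integral, which is immediate since $\Gamma$ is piecewise smooth and the integrands are polynomials restricted to it. It is also worth noting that the conclusion is insensitive to whether the inflow set $\Gamma_{-}$ is defined with a strict or non-strict inequality, since on either piece the sign of the weight is fixed.
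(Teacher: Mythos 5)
Your argument is correct and is essentially identical to the paper's own proof: both express the quadratic form $\bm{z}\Tr\tilde{\mat{E}}_{\pm}\bm{z}$ as $\oint_{\Gamma_{\pm}} \fnc{Z}^{2}\left(\beta_{x}n_{x}+\beta_{y}n_{y}\right)\mr{d}\Gamma$ for the polynomial $\fnc{Z}=\sum_{k}z_{k}\fnc{P}_{k}$ and conclude from the fixed sign of $\beta_{x}n_{x}+\beta_{y}n_{y}$ on each piece of the boundary. No gaps; your additional remarks about interchanging finite sums with the integral and about strict versus non-strict inequalities are fine but not needed.
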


\begin{proof}
  We prove the result for $\tilde{\mat{E}}_{-}$, since the proof for the
  positive-definite matrix is analogous.  Let $\bm{u} \in \mathbb{R}^{\nmin{p}}$
  be an arbitrary nonzero vector, and let $u_{k}$ denote its entries.  Then
  \begin{align*}
    \bm{u}^{T} \tilde{\mat{E}}_{-} \bm{u}
    &= \sum_{k=1}^{\nmin{\tau}} \sum_{l=1}^{\nmin{\tau}}
    \oint_{\Gamma_{-}} \left( u_{k} \fnc{P}_{k}(x,y) \right) \left( u_{l} \fnc{P}_{l}(x,y) \right) \left( \beta_{x} n_{x} + \beta_{y} n_{y} \right) \mr{d}\Gamma \\
    &= \oint_{\Gamma_{-}} \left[\fnc{U}(x,y)\right]^{2} \left( \beta_{x} n_{x} + \beta_{y} n_{y} \right) \mr{d}\Gamma
  \end{align*}
  where $\fnc{U}(x,y) \equiv \sum_{k=1}^{\nmin{\tau}} u_{k} \fnc{P}_{k}(x,y)$.  The
  integrand in the above is the product of a squared polynomial function and the
  non-positive quantity $(\beta_{x} n_{x} + \beta_{y} n_{y}) \leq 0$ $\forall\;
  (x,y) \in \Gamma_{-}$.  Thus the desired result follows.
\qquad\end{proof}

We now turn to the main result of this appendix:
\begin{theorem}\label{thrm:E_decomp}
  Suppose $\mat{F}_{x} = \mat{0}$ and $\mat{F}_{y} = \mat{0}$ in the definitions
  of $\mat{E}_{x}$ and $\mat{E}_{y}$.  Then, for any $\beta_{x},\beta_{y} \in
  \mathbb{R}$, the matrix $\mat{E} \equiv \beta_{x} \Ex + \beta_{y} \Ey$ can be
  decomposed into $\mat{E} = \mat{E}_{+} + \mat{E}_{-}$ where $\mat{E}_{+}$ is
  positive semi-definite, $\mat{E}_{-}$ is negative semi-definite, and
  $\mat{E}_{\pm}$ satisfy the accuracy conditions
  \begin{equation}
    \pk\Tr \mat{E}_{\pm} \pM = 
    \displaystyle\oint_{\Gamma_{\pm}} \fpk \fpm \left( \beta_{x} n_{x} +
    \beta_{y} n_{y} \right) \mr{d}\Gamma, \qquad
    \forall\;k,m \in \{ 1,2,\dots,\nmin{\tau} \}.
    \label{eq:E_accuracy}
  \end{equation}
\end{theorem}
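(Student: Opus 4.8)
The plan is to leverage the hypothesis $\Fx = \Fy = \mat{0}$ to reduce the congruent form of $\mat{E}$ to a block-diagonal matrix, whose two diagonal blocks can then be split into definite pieces independently. Setting $\Fx = \Fy = \mat{0}$ in the block representation recalled at the start of this appendix gives
\begin{equation*}
  \tV\Tr \mat{E} \tV =
  \begin{bmatrix}
    \tilde{\mat{E}} & \mat{0} \\
    \mat{0} & \mat{G}
  \end{bmatrix},
\end{equation*}
where $\tilde{\mat{E}} = \beta_{x}\tEx + \beta_{y}\tEy$ and $\mat{G} = \beta_{x}\Gx + \beta_{y}\Gy$ are both symmetric.

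First I would dispose of the top-left block using Lemma~\ref{lem:tilde_E_def}, which already supplies the splitting $\tilde{\mat{E}} = \tilde{\mat{E}}_{+} + \tilde{\mat{E}}_{-}$ into positive and negative semi-definite parts obtained from the boundary integrals over $\Gamma_{+}$ and $\Gamma_{-}$. For the bottom-right block $\mat{G}$, which carries no accuracy constraint, I would apply the spectral theorem to write $\mat{G} = \mat{G}_{+} + \mat{G}_{-}$, where $\mat{G}_{+}$ and $\mat{G}_{-}$ collect the nonnegative and negative eigenvalue contributions and are therefore positive and negative semi-definite, respectively. I would then define
\begin{equation*}
  \mat{E}_{\pm} \equiv \tV\invTr
  \begin{bmatrix}
    \tilde{\mat{E}}_{\pm} & \mat{0} \\
    \mat{0} & \mat{G}_{\pm}
  \end{bmatrix}
  \tV^{-1}.
\end{equation*}

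Three verifications then remain. That $\mat{E}_{+} + \mat{E}_{-} = \mat{E}$ is immediate, since the diagonal blocks recombine to $\tilde{\mat{E}}$ and $\mat{G}$. Semi-definiteness follows from congruence: for any $\bm{z}$, $\bm{z}\Tr \mat{E}_{+} \bm{z} = (\tV^{-1}\bm{z})\Tr \mathrm{diag}(\tilde{\mat{E}}_{+},\mat{G}_{+})(\tV^{-1}\bm{z}) \ge 0$, because the core block-diagonal matrix is positive semi-definite and $\tV^{-1}$ is invertible; the argument for $\mat{E}_{-}$ is identical with the inequality reversed. Finally, the accuracy conditions \eqref{eq:E_accuracy} hold because each $\pk$ with $k \le \nmin{\tau}$ is a column of $\V$, so $\tV^{-1}\pk = \bm{e}_{k}$, the $k$-th canonical basis vector; hence $\pk\Tr \mat{E}_{\pm} \pM = \bm{e}_{k}\Tr \mathrm{diag}(\tilde{\mat{E}}_{\pm},\mat{G}_{\pm}) \bm{e}_{m} = (\tilde{\mat{E}}_{\pm})_{k,m}$, which equals the corresponding integral over $\Gamma_{\pm}$ by construction.

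The main obstacle is making sure that the essentially arbitrary spectral split of $\mat{G}$ neither destroys the overall inertia nor corrupts the accuracy conditions. The resolution, and the reason the hypothesis $\Fx = \Fy = \mat{0}$ is essential, is that the polynomial test vectors lie entirely in the range of $\V$, so $\tV^{-1}$ maps them into the top block and renders $\mat{G}_{\pm}$ invisible to \eqref{eq:E_accuracy}; simultaneously, the vanishing coupling blocks let the two definite splittings be assembled block-diagonally without cross terms. With nonzero $\Fx$ and $\Fy$, fixing the top-left block to the accuracy-mandated $\tilde{\mat{E}}_{\pm}$ would no longer guarantee a semi-definite whole, and a genuinely different construction would be required.
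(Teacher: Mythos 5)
Your proof is correct and rests on the same ingredients as the paper's: the split of $\tilde{\mat{E}}$ into $\tilde{\mat{E}}_{+}+\tilde{\mat{E}}_{-}$ via the boundary integrals and Lemma~\ref{lem:tilde_E_def}, a spectral split of $\mat{G}=\beta_{x}\Gx+\beta_{y}\Gy$, and the congruence by $\tV^{-1}$ to transfer definiteness and accuracy back to $\mat{E}_{\pm}$. Where you diverge is in how semi-definiteness of the assembled matrix is verified. The paper retains the general $2\times 2$ block form with off-diagonal blocks $\mat{F}_{\pm}$ and invokes the Schur-complement criterion, which obliges it to make $\mat{G}_{-}$ \emph{strictly} negative-definite (splitting any zero eigenvalue as $c-c$) so that $\mat{G}_{-}^{-1}$ exists, and only afterwards uses $\Fx=\Fy=\mat{0}$ to collapse the Schur complement to $\tilde{\mat{E}}_{-}$. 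You impose $\Fx=\Fy=\mat{0}$ at the outset, which makes the congruent form block-diagonal; semi-definiteness is then immediate from the congruence of a block-diagonal PSD (resp.\ NSD) matrix, and you never need $\mat{G}_{-}$ to be invertible. Your route is the more elementary one for the theorem as stated; what the paper's longer route buys is a visible statement of the condition ($\mat{S}_{-}\preceq\mat{0}$) that an extension to nonzero $\Fx,\Fy$ would have to satisfy, which is exactly the point taken up in the remark following the proof. Your closing observation about why the hypothesis matters agrees with that remark.
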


\begin{proof}
  Consider the decomposition
  \begin{equation*}
    \mat{E} = \mat{E}_{+} + \mat{E}_{-} = 
    \tV\invTr 
    \begin{bmatrix} \tilde{\mat{E}}_{+} & \left(\mat{F}_{+}\right)\Tr \\
      \mat{F}_{+} & \mat{G}_{+}
    \end{bmatrix} \tV^{-1}
    +
    \tV\invTr 
    \begin{bmatrix} \tilde{\mat{E}}_{-} & \left(\mat{F}_{-}\right)\Tr \\
      \mat{F}_{-} & \mat{G}_{-},
    \end{bmatrix} \tV^{-1},
  \end{equation*}
  where $\tilde{\mat{E}}_{\pm}$ are defined above, and the pairs
  $(\mat{F}_{+},\mat{F}_{-})$ and $(\mat{G}_{+},\mat{G}_{-})$ are
  yet-to-be-determined decompositions of $\beta_{x} \mat{F}_{x} + \beta_{y}
  \mat{F}_{y}$ and $\beta_{x} \mat{G}_{x} + \beta_{y} \mat{G}_{y}$,
  respectively; note that the above decomposition of $\mat{E}$ is distinct from
  its definition, which involves $\mat{E}_{x}$ and $\mat{E}_{y}$.

  The accuracy of the matrices $\mat{E}_{+}$ and $\mat{E}_{-}$, as defined
  above, can be established using the same approach used in the proof of
  Theorem~\ref{ExistDiagH}.  Therefore, we focus on showing that $\mat{E}_{-}$
  can be made negative semi-definite; again, an analogous proof can be used to
  show $\mat{E}_{+}$ is positive semi-definite.

  To prove that $\mat{E}_{-}$ is negative semi-definite, it suffices to show that
  the matrix
  \begin{equation*}
    \begin{bmatrix} \tilde{\mat{E}}_{-} & \left(\mat{F}_{-}\right)\Tr \\
      \mat{F}_{-} & \mat{G}_{-}
    \end{bmatrix} 
  \end{equation*}
  \ignore{
    \begin{equation*}
      \equiv
      \beta_{x}
      \begin{bmatrix}
        \tEx_{-}& \left(\mat{F}_{x}_{-}\right)\Tr \\
        \mat{F}_{x}_{-} & \mat{G}_{x}_{-}  
      \end{bmatrix}
      +
      \beta_{y}
      \begin{bmatrix}
        \tEy_{-}& \left(\mat{F}_{y}_{-}\right)\Tr \\
        \mat{F}_{y}_{-} & \mat{G}_{y}_{-}  
      \end{bmatrix}
    \end{equation*}
  } is negative semi-definite.  This will be the case if we can ensure
  $\mat{G}_{-}$ is negative-definite and the corresponding Schur complement,
  \begin{equation*}
    \mat{S}_{-} \equiv \tilde{\mat{E}}_{-} - \left(\mat{F}_{-}\right)\Tr \left(
    \mat{G}_{-} \right)^{-1} \mat{F}_{-},
  \end{equation*}
  is negative semi-definite; see, for example, \cite[Appendix A.5.5]{boyd:2004}.

  We first tackle the definiteness of $\mat{G}_{-}$.  Recall that $\mat{G}_{x}$
  and $\mat{G}_{y}$ are symmetric but otherwise arbitrary.  Therefore, the
  matrix $\mat{G} = \beta_{x} \mat{G}_{x} + \beta_{y} \mat{G}_{y}$ has the
  eigendecomposition $\mat{G} = \mat{R} \Lambda \mat{R}\Tr$, where $\mat{R}$
  holds the eigenvectors and the diagonal matrix $\Lambda$ holds the
  eigenvalues.  For any set of eigenvalues, we can construct the nonunique
  decomposition
  \begin{equation*}
    \mat{G} = \mat{R} \Lambda_{+} \mat{R}\Tr + \mat{R} \Lambda_{-} \mat{R}\Tr,
  \end{equation*}
  such that $\Lambda_{+}$ is diagonal positive-definite and $\Lambda_{-}$ is
  diagonal negative-definite; note that any zero eigenvalue in $\Lambda$ can be
  decomposed as $c - c$ for arbitrary $c > 0$.  Equating $\mat{G}_{-}$ with
  $\mat{R} \Lambda_{-} \mat{R}\Tr$ we have that $\mat{G}_{-}$ is symmetric
  negative-definite and therefore invertible.

  Finally, we need to show that $\mat{S}_{-}$ is negative semi-definite.  From
  Lemma~\ref{lem:tilde_E_def}, we have that $\tilde{\mat{E}}_{-}$ is negative
  semi-definite.  Thus, showing $\mat{S}_{-} \preceq \mat{0}$ is equivalent to
  showing\footnote{The notation $A \preceq B$ means $A - B$ is negative
    semi-definite}
  \begin{equation*}
    \tilde{\mat{E}}_{-} \preceq \left(\mat{F}_{-}\right)\Tr \left( \mat{G}_{-}
    \right)^{-1} \mat{F}_{-},
  \end{equation*}
  This statement is true provided the entries in $\mat{F}_{-}$ are sufficiently
  small, which is certainly the case under the assumption that $\mat{F}_{x} =
  \mat{F}_{y} = \mat{0}$.  This concludes the proof.  \qquad\end{proof}

\begin{remark}
  In general, the assumption that $\mat{F}_{x} = \mat{F}_{y} = \mat{0}$ is
  stronger than necessary, since only $\mat{S}_{-} \preceq 0$ is required;
  however, it is not clear how to weaken this assumption when $\beta_{x}$ and
  $\beta_{y}$ are not known a priori.
\end{remark}

\begin{remark}
  The matrices $\mat{E}_{x}$ and $\mat{E}_{y}$ constructed for the simplex
  operators satisfy the conditions of the Theorem~\ref{thrm:E_decomp}, \ie
  $\mat{F}_{x} = \mat{F}_{y} = \mat{0}$.
\end{remark}


\bibliographystyle{siam}
\bibliography{references}

\begin{thebibliography}{10}

\bibitem{boyd:2004}
{\sc Stephen Boyd and Lieven Vandenberghe}, {\em Convex Optimization},
  Cambridge University Press, Mar. 2004.

\bibitem{Carpenter1996}
{\sc Mark~H. Carpenter and David Gottlieb}, {\em Spectral methods on arbitrary
  grids}, Journal of Computational Physics, 129 (1996), pp.~74--86.

\bibitem{Carpenter1994}
{\sc Mark~H. Carpenter, David Gottlieb, and Saul Abarbanel}, {\em Time-stable
  boundary conditions for finite-difference schemes solving hyperbolic systems:
  Methodology and application to high-order compact schemes}, Journal of
  Computational Physics, 111 (1994), pp.~220--236.

\bibitem{Carpenter1999}
{\sc Mark~H. Carpenter, Jan Nordstr\"{o}m, and David Gottlieb}, {\em {A stable
  and conservative interface treatment of arbitrary spatial accuracy}}, Journal
  of Computational Physics, 148 (1999), pp.~341--365.

\bibitem{Chiu2011}
{\sc Edmond~K. Chiu, Qiqi Wang, and Antony Jameson}, {\em A conservative
  meshless scheme: general order formulation and application to {E}uler
  equations}, in 49th {AIAA} Aerospace Sciences Meeting, no.~AIAA--2011--651,
  Orlando, Florida, Jan. 2011.

\bibitem{Cohen2001}
{\sc Gary Cohen, Patrick Joly, Jean~E. Roberts, and Nathalie Tordjman}, {\em
  Higher order triangular finite elements with mass lumping for the wave
  equation}, SIAM Journal on Numerical Analysis, 38 (2001), pp.~2047--2078.

\bibitem{DCDRF2014}
{\sc David~C. {Del Rey Fern\'andez}, Pieter~D. Boom, and David~W. Zingg}, {\em
  A generalized framework for nodal first derivative summation-by-parts
  operators}, Journal of Computational Physics, 266 (2014), pp.~214--239.

\bibitem{Dubiner1991}
{\sc Moshe Dubiner}, {\em Spectral methods on triangles and other domains},
  Journal of Scientific Computing, 6 (1991), pp.~345--390.

\bibitem{funaro:1988}
{\sc Daniele Funaro and David Gottlieb}, {\em {A new method of imposing
  boundary conditions in pseudospectral approximations of hyperbolic
  equations}}, Mathematics of Computation, 51 (1988), pp.~599--613.

\bibitem{Gassner2013}
{\sc Gregor~J. Gassner}, {\em A skew-symmetric discontinuous {G}alerkin
  spectral element discretization and its relation to sbp-sat finite difference
  methods}, SIAm Journal on Scientific Computing, 35 (2013), pp.~A1233--A1253.

\bibitem{Giraldo2006}
{\sc Francis~X. Giraldo and Mark~A. Taylor}, {\em A diagonal-mass-matrix
  triangular-spectral-element method based on cubature points}, Journal of
  Engineering Mathematics, 56 (2006), pp.~307--322.

\bibitem{golub:1996}
{\sc Gene~H. Golub and Charles~F. Van~Loan}, {\em {Matrix Computations}}, The
  John Hopkins University Press, third~ed., 1996.

\bibitem{hesthaven:2008}
{\sc Jan~S. Hesthaven and Tim Warburton}, {\em Nodal discontinuous Galerkin
  methods: algorithms, analysis, and applications}, Springer-Verlag, New York,
  2008.

\bibitem{Hicken2008}
{\sc Jason~E. Hicken and David~W. Zingg}, {\em A parallel {N}ewton-{K}rylov
  solver for the {E}uler equations discretized using simultaneous approximation
  terms}, AIAA Journal, 46 (2008), pp.~2773--2786.

\bibitem{Hicken2013}
{\sc Jason~E. Hicken and David~W. Zingg}, {\em Summation-by-parts operators and
  high-order quadrature}, Journal of Computational and Applied Mathematics, 237
  (2013), pp.~111--125.

\bibitem{Hunter2007}
{\sc John~D. Hunter}, {\em {Matplotlib: A 2D graphics environment}}, Computing
  In Science \& Engineering, 9 (2007), pp.~90--95.

\bibitem{Kitson2003}
{\sc Adrian Kitson, Robert~I. McLachlan, and Nicolas Robidoux}, {\em
  {Skew-adjoint finite difference methods on nonuniform grids}}, New Zealand
  Journal of Mathematics, 32 (2003), pp.~139--159.

\bibitem{Koornwinder1975}
{\sc Tom Koornwinder}, {\em Two-variable analogues of the classical orthogonal
  polynomials}, in Theory and application of special functions, Academic Press
  New York, 1975, pp.~435--495.

\bibitem{Kreiss1974}
{\sc Heinz-Otto Kreiss and Godela Scherer}, {\em Finite element and finite
  difference methods for hyperbolic partial differential equations}, in
  Mathematical aspects of finite elements in partial differential equations,
  Academic Press, New York/London, 1974, pp.~195--212.

\bibitem{Chiu2012}
{\sc Edmond Kwan-yu Chiu, Qiqi Wang, Rui Hu, and Antony Jameson}, {\em A
  conservative {Mesh-Free} scheme and generalized framework for conservation
  laws}, SIAM Journal on Scientific Computing, 34 (2012), pp.~A2896--A2916.

\bibitem{liu:1998}
{\sc Yen Liu and Marcel Vinokur}, {\em Exact integrations of polynomials and
  symmetric quadrature formulas over arbitrary polyhedral grids}, Journal of
  Computational Physics, 140 (1998), pp.~122--147.

\bibitem{Liu2006}
{\sc Yen Liu, Marcel Vinokur, and Z.~J. Wang}, {\em Spectral difference method
  for unstructured grids i: Basic formulation}, Journal of Computational
  Physics, 216 (2006), pp.~780--801.

\bibitem{Mattsson2012}
{\sc Ken Mattsson}, {\em {Summation by Parts Operators for Finite Difference
  Approximations of Second-Derivatives with Variable Coefficients}}, Journal of
  Scientific Computing, 51 (2012), pp.~650--682.

\bibitem{Mattsson2004b}
{\sc Ken Mattsson and Jan Nordstr\"{o}m}, {\em {Summation by parts operators
  for finite difference approximations of second derivatives}}, Journal of
  Computational Physics, 199 (2004), pp.~503--540.

\bibitem{Morinishi1998}
{\sc Yohei Morinishi, Thomas~S. Lund, Oleg~V. Vasilyev, and Parviz Moin}, {\em
  {Fully Conservative Higher Order Finite Difference Schemes for Incompressible
  Flow}}, Journal of Computational Physics, 143 (1998), pp.~90--124.

\bibitem{Mulder2001}
{\sc Wim~A. Mulder}, {\em Higher-order mass-lumped finite elements for the wave
  equation}, Journal of Computational Acoustics, 09 (2001), pp.~671--680.

\bibitem{Nordstrom2006}
{\sc Jan Nordstr\"{o}m}, {\em {Conservative finite difference formulations,
  variable coefficients, energy estimates and artificial dissipation}}, Journal
  of Scientific Computing, 29 (2006), pp.~375--404.

\bibitem{Nordstrom2003}
{\sc Jan Nordstr\"om, Karl Forsberg, Carl Adamsson, and Peter Eliasson}, {\em
  Finite volume methods, unstructured meshes and strict stability for
  hyperbolic problems}, Applied Numerical Mathematics, 45 (2003), pp.~453--473.

\bibitem{Nordstrom2009}
{\sc Jan Nordstr\"{o}m, Jing Gong, Edwin van~der Weide, and Magnus Sv\"{a}rd},
  {\em A stable and conservative high order multi-block method for the
  compressible {N}avier-{S}tokes equations}, Journal of Computational Physics,
  228 (2009), pp.~9020--9035.

\bibitem{Strand1994}
{\sc Bo~Strand}, {\em Summation by parts for finite difference approximations
  for d/dx}, Journal of Computational Physics, 110 (1994), pp.~47--67.

\bibitem{Svard2004b}
{\sc Magnus Sv\"{a}rd}, {\em {On coordinate transformations for
  summation-by-parts operators}}, Journal of Scientific Computing, 20 (2004),
  pp.~29--42.

\bibitem{Svard2005}
{\sc Magnus Sv\"{a}rd, Ken Mattsson, and Jan Nordstr\"{o}m}, {\em
  {Steady-State} computations using {Summation-by-Parts} operators}, Journal of
  Scientific Computing, 24 (2005), pp.~79--95.

\bibitem{witherden:2014}
{\sc Freddie~D. Witherden and Pieter Vincent}, {\em On the identification of
  symmetric quadrature rules for finite element methods}, Sept. 2014.

\bibitem{Yee2002}
{\sc Helen~C. Yee and Bj{\"{o}}rn Sj{\"{o}}green}, {\em Designing adaptive
  low-dissipative high order schemes for long-time integrations}, in Turbulent
  Flow Computation, Springer, 2002, pp.~141--198.

\bibitem{Yee2000}
{\sc Helen~C. Yee, Marcel Vinokur, and M.~Jahed Djomehri}, {\em Entropy
  splitting and numerical dissipation}, Journal of Computational Physics, 162
  (2000), pp.~33--81.

\bibitem{zhang:2009}
{\sc Linbo Zhang, Tao Cui, and Hui Liu}, {\em A set of symmetric quadrature
  rules on triangles and tetrahedra}, Journal of Computational Mathematics, 27
  (2009), pp.~89--96.

\end{thebibliography}

\end{document}